\documentclass[11pt,letterpaper]{amsart}

\usepackage{amscd,amssymb,amsthm,verbatim}
\usepackage{enumerate}
\usepackage{bm}
\usepackage{mathrsfs,graphicx}
\usepackage{stmaryrd}
\usepackage{slashed}

\usepackage[dvipsnames]{xcolor}

\usepackage[
  bookmarks=true,
  colorlinks=true,
  linkcolor=blue,
  urlcolor=blue,
  citecolor=blue,
  plainpages=false,
  pdfpagelabels=true,
  final,
  breaklinks=true
]{hyperref}

\usepackage[numbers,sort&compress]{natbib}

\numberwithin{equation}{section}

\usepackage{enumitem}
\theoremstyle{plain}
\newtheorem{theorem}{Theorem}[section]
\newtheorem{proposition}[theorem]{Proposition}
\newtheorem{lemma}[theorem]{Lemma}
\newtheorem{corollary}[theorem]{Corollary}

\theoremstyle{definition}
\newtheorem{definition}[theorem]{Definition}
\theoremstyle{remark}
\newtheorem{remark}[theorem]{Remark}
\usepackage{csquotes}
\usepackage{graphicx}
\usepackage[dvipsnames]{xcolor}
\usepackage{float}

\colorlet{mycolor}{RedViolet}

\colorlet{mycolor2}{RedViolet}
\usepackage{accents}
\usepackage{pgfplots}
\pgfplotsset{compat=1.18}

\newcounter{Counter}
\theoremstyle{plain}

\newcommand{\tr}{\operatorname{tr}}

\newcommand{\Div}{\operatorname{div}}

\DeclareMathOperator{\Per}{Per}
\DeclareMathOperator{\graph}{graph}
\DeclareMathOperator{\spt}{spt}
\DeclareMathOperator{\Reg}{Reg}
\DeclareMathOperator{\Sing}{Sing}
\newcommand{\loc}{\mathrm{loc}}

\usepackage{MnSymbol}

\DeclareFontFamily{U}{MnSymbolC}{}
\DeclareSymbolFont{MnSyC}{U}{MnSymbolC}{m}{n}
\DeclareFontShape{U}{MnSymbolC}{m}{n}{
	<-6>  MnSymbolC5
	<6-7>  MnSymbolC6
	<7-8>  MnSymbolC7
	<8-9>  MnSymbolC8
	<9-10> MnSymbolC9
	<10-12> MnSymbolC10
	<12->   MnSymbolC12}{}
\DeclareMathSymbol{\intprod}{\mathbin}{MnSyC}{'270}

\title[Hyperboloidal and Spacetime PMT in all dimensions]
{The Hyperboloidal and Spacetime Positive Mass Theorem in All Dimensions}

\author[Hirsch]{Sven Hirsch}
\address{Columbia University, 2990 Broadway, New York, NY 10027, USA}
\email{sven.hirsch@columbia.edu}

\author[Khuri]{Marcus Khuri}
\address{Department of Mathematics, Stony Brook University, Stony Brook, NY, 11794, USA}
\email{marcus.khuri@stonybrook.edu}

\author[Lesourd]{Martin Lesourd}
\address{Sphere 28 LLC}
\email{mlesourd@sphere28.io}

\author[Zhang]{Yiyue Zhang}
\address{Beijing Institute of Mathematical Sciences and Applications, Beijing, 101408, China}
\email{zhangyiyue@bimsa.cn}

\begin{document}

\begin{abstract}
Using the recent work of Brendle--Wang on the Riemannian positive mass theorem, we prove the spacetime positive mass theorem for asymptotically flat and asymptotically hyperboloidal initial data sets in arbitrary dimensions.
\end{abstract}

\maketitle

\section{Introduction} \label{sec:intro}

In this work we prove the spacetime positive mass theorem for asymptotically flat and asymptotically hyperboloidal initial data sets satisfying the dominant energy condition, together with corresponding rigidity statements.
We also note the concurrent papers of Brendle-Wang \cite{BrendleWang2} and of Tsang \cite{Tsang} where similar results were obtained, as well as the approaches by Schoen-Yau \cite{SchoenYau22} and by Lohkamp \cite{Lohkamp1,Lohkamp2}.

\begin{theorem}\label{thm:main2}
Let $(M^n,g,k)$, $n\geq 3$ be a complete asymptotically hyperboloidal\footnote{Several results cited in this paper, such as \cite{HuangLee2, LundbergHyperbolic, BrendleWang, HirschHuang}, require slightly stronger decay than is usually assumed. To improve readability, we absorb this into our notion of asymptotically hyperboloidal and asymptotically flat initial data sets; cf.\ Remark \ref{remark stronger decay}.} initial data set satisfying the dominant energy condition
\[
\mu \ge |J|_g.
\]
Then its total energy--momentum vector $(E,P)$ satisfies
\[
E \ge |P|.
\]
Moreover, if $M^n$ is spin or $k=g$, then equality holds if and only if $(M^n,g,k)$ admits an isometric embedding as a spacelike hypersurface into Minkowski space.
\end{theorem}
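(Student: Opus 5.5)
We argue by contradiction and reduce the hyperboloidal statement to the Riemannian positive mass theorem of Brendle--Wang (valid in all dimensions, no spin assumption), which we use as a black box. The reduction goes through the Jang equation together with the density results cited in the footnote.

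\textbf{Step 1: reduction to a strict, simple setting.} Suppose $E<|P|$. After a boost we may assume the energy--momentum vector is spacelike, with $P$ pointing in a fixed direction. We then invoke the density theorems for asymptotically hyperboloidal data (in the spirit of \cite{HuangLee2, LundbergHyperbolic}). These let us perturb $(g,k)$ to nearby data which
\begin{itemize}
\item are harmonically asymptotic, or exactly hyperbolic up to a controlled correction near infinity;
\item satisfy the strict dominant energy condition $\mu>|J|_g$;
\item still have $E<|P|$.
\end{itemize}

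\textbf{Step 2: the Jang-type construction.} We solve the Jang equation with asymptotics adapted to the hyperboloidal end. The graph is asymptotic to a hyperboloid in Minkowski space, so $k-g$ is small at infinity and $\bar g=g+df^2$ is asymptotically flat. Blow-up may occur along MOTS/MITS. In all dimensions the blow-up surfaces can be singular, so we use the Brendle--Wang framework. The key point is that their Riemannian PMT tolerates low-regularity or singular sets, or is proved via a free-boundary/$\mu$-bubble method that does not need regularity in high dimension. This should make the Schoen--Yau capping argument go through. On the Jang graph we get the standard identity
\[
\bar R = 2(\mu-J(w)) + |h-k|^2 + 2|q|^2 - 2\,\overline{\mathrm{div}}\, q \ge 0 \quad\text{weakly}.
\]
A conformal change then yields an asymptotically flat manifold whose ADM mass is bounded above by a linear combination $c(E-|P|)$ with $c>0$. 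This is the hyperboloidal analogue of the Eichmair--Huang--Lee--Schoen computation. The momentum direction enters through the tilt of the asymptotic hyperboloid, chosen along the boost direction of Step 1.

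\textbf{Step 3: contradiction.} By Brendle--Wang, the manifold from Step 2 has nonnegative mass, contradicting $E<|P|$. Hence $E\ge|P|$.

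\textbf{Step 4: rigidity.}
\begin{itemize}
\item If $k=g$, the data are umbilic. The dominant energy condition then reduces to $R_g\ge -n(n-1)$, and equality $E=0$ gives hyperbolic space by the asymptotically hyperbolic rigidity. The conclusion then follows from the remark that the hyperboloid embeds in Minkowski space; the relevant input is the hyperbolic rigidity in the spin or Brendle--Wang setting, with care about the momentum.
\item If $M$ is spin and $E=|P|$, we use Witten spinors adapted to hyperboloidal data. The spinor equation yields a null-parallel spinor, and the argument of \cite{HirschHuang} gives a pp-wave type structure. The hyperboloidal asymptotics force $E=|P|=0$ and the vanishing of the second fundamental form deviation. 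This yields the isometric embedding into Minkowski space.
\end{itemize}

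\textbf{Main obstacle.} The hardest step is Step 2: controlling the Jang blow-up and the mass formula in high dimensions, where MOTS may have singular sets. My first attempt would be to avoid regularity altogether by combining Jang with a $\mu$-bubble or Brendle--Wang type argument directly on the spacetime data. If that fails, I would instead reduce to the asymptotically flat spacetime PMT via the hyperboloidal-to-flat Jang deformation, and prove the flat case by the same Brendle--Wang input.
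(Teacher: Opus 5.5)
Your outline has the same skeleton as the paper: a boost reduction to $E<0$ in some chart, density to Wang asymptotics with strict DEC, Lundberg's Jang graph with $\bar E_{ADM}=(n-1)E$, Brendle--Wang, and cited rigidity. The gap is Step 2, the only place where $n\ge 8$ matters: it is asserted rather than proved, and the mechanism you sketch does not apply.

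For $n\ge 8$ the difficulty is not just that the MOTS may be singular. The graphical component $(\Sigma_\infty,\bar g)$ is incomplete, and the singular points of $\bar\Sigma$ can accumulate along the asymptotically cylindrical ends, so the singular set need not be compact. Moreover, $\bar g$ only satisfies $R_{\bar g}\ge 2|X|^2-2\,\mathrm{div}_{\bar g}X$. In the hyperboloidal case, $R_{\bar g}=2(n-2)\Delta_{S^{n-1}}\boldsymbol\alpha\, r^{-n}+\dots$ is not integrable, and the flux $\langle X,\nu\rangle\sim(n-2)(n-3)\boldsymbol\alpha\, r^{1-n}$ does not vanish. The classical conformal change to nonnegative scalar curvature (solving $-\Delta u+c_nR_{\bar g}u=0$ and capping off the cylinders) cannot be carried out on such a singular, incomplete space. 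Nor is Brendle--Wang a black-box ``PMT tolerating singular sets.'' Their theorem applies to complete $n$-datasets: a metric with Schwarzschild asymptotics plus a weight $\rho$ satisfying the spectral inequality \eqref{bw} with the specific constant $\frac{n+1}{n+2}$. The conformal blow-up that produces completeness also needs a Minkowski-dimension bound on the singular set, not just a Hausdorff one.

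The missing ingredients are exactly what the paper supplies:
\begin{enumerate}
\item[(1)] The bound $\dim_{\mathcal M}\Sing(\bar\Sigma)\le n-7$ of Theorem \ref{thm:Jang}, obtained from the $C$-almost-minimizing property via Focardi--Marchese--Spadaro.
\item[(2)] Lemma \ref{lem:jang-stability}, which replaces your conformal change by completing the square in the Schoen--Yau identity. Here $\rho=1+\psi(n-3)\alpha_0r^{2-n}$ is chosen to cancel the mean of the flux, and the quantitative bound $\mu-|J|_g\ge\lambda r^{-n-1}$ absorbs the error terms. Your Step 1 gives only $\mu>|J|_g$, not this bound.
\item[(3)] A conformal blow-up carried out on compact pieces $\mathcal S_i$ of the noncompact singular set, followed by a separate completeness argument.
\item[(4)] A perturbation to Schwarzschild asymptotics that preserves \eqref{bw} and the ADM energy.
\end{enumerate}
Because of the weight, Brendle--Wang then yields $(n-1)\hat c+2(n-3)\alpha_0\ge 0$, not $\hat E_{ADM}\ge 0$. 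You need the identity $\alpha_0=-\hat E_{ADM}/(n-3)$ to conclude $\hat E_{ADM}\ge 0$, and hence $E\ge 0$.

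There are also smaller issues:
\begin{enumerate}
\item[(a)] The relevant density theorem is Dahl--Sakovich, not Huang--Lee or Lundberg.
\item[(b)] The correct reduction is that some chart $\Phi\circ\varphi$ has $\mathring E=a_0E+\sum_i a_iP^i<0$. A boost does not change the causal type of $(E,P)$, and the Jang energy is $(n-1)\mathring E$, not something bounded by $c(E-|P|)$.
\item[(c)] In the rigidity step for $k=g$, the real content is that a null vector $E=|P|$ forces hyperbolic space (Huang--Jang--Martin). You leave this as ``care about the momentum.''
\item[(d)] The spin rigidity result you need is Hirsch--Jang--Zhang, not Hirsch--Huang.
\end{enumerate}
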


We point out that the rigidity statement remains open in the non-spin case when $k\ne g$, and it would be interesting to close this gap; see also \cite{HuangLee, HuangLee2, HuangJangMartin, HirschHuang}.
We also note that there is an asymptotically AdS positive mass theorem \cite{ChruscielMaertenTod, Maerten,HirschZhang3}, which is so far completely unknown in the non-spin setting.

\begin{theorem}\label{thm:main}
Let $(M^n,g,k)$, $n\geq 3$ be a complete asymptotically flat initial data set satisfying the dominant energy condition
\[
\mu \ge |J|_g.
\]
Then its ADM energy--momentum vector $(E_{\mathrm{ADM}},P_{\mathrm{ADM}})$ satisfies
\[
E_{\mathrm{ADM}} \ge |P_{\mathrm{ADM}}|.
\]
Moreover, equality holds if and only if $(M^n,g,k)$ admits an isometric embedding as a spacelike hypersurface into a pp-wave spacetime
$(\mathbf M^{n+1},\mathbf g)$, where $\mathbf M^{n+1}=\mathbb R^{n+1}$ and
\[
\mathbf g
=
-2\,dt\,du
+
F\,du^2
+
dx_1^2+\cdots+dx_{n-1}^2,
\]
for a $t$-independent function $F$ satisfying
\[
\Delta_{\mathbb R^{n-1}}F(\cdot,u)\le 0
\qquad\text{for every }u\in\mathbb R.
\]
\end{theorem}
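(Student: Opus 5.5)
The natural strategy is to reduce Theorem~\ref{thm:main} to the Riemannian positive mass theorem of Brendle--Wang by a Jang-type deformation combined with a density argument. This follows the Schoen--Yau and Eichmair--Huang--Lee--Schoen approach. In dimensions $n\le 7$ that approach uses MOTS/Jang graphs whose regularity is guaranteed. In higher dimensions I would avoid minimizing hypersurfaces altogether and work with spacetime harmonic functions, or rather a level-set/Jang-type construction adapted to Brendle--Wang's method.

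\textbf{Step 1 (density).} By the density theorem of Eichmair--Huang--Lee--Schoen, which holds in all dimensions, I would first reduce to initial data that are harmonically asymptotic and satisfy the strict dominant energy condition $\mu>|J|_g$. The ADM quantities change by an arbitrarily small amount under this reduction, so it suffices to prove $E_{\mathrm{ADM}}\ge |P_{\mathrm{ADM}}|$ in this class. Next I would apply a boost argument. If $E_{\mathrm{ADM}}<|P_{\mathrm{ADM}}|$, one can Lorentz-transform the asymptotic data, as in EHLS, to arrange $E_{\mathrm{ADM}}<0$. It therefore suffices to prove $E_{\mathrm{ADM}}\ge 0$.

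\textbf{Step 2 (reduction to the Riemannian case).} To show $E_{\mathrm{ADM}}\ge0$, I would solve Jang's equation $H_{\Sigma}-\tr_\Sigma k=0$ for a graph $\Sigma=\graph f$ in $M\times\mathbb R$. The Jang metric $\bar g=g+df^2$ then satisfies the Schoen--Yau identity: $R_{\bar g}\ge 2(\mu-|J|)+|h-k|^2-2|X|^2+2\Div X$, with the same ADM energy. In high dimensions, the blow-up surfaces of Jang's equation (MOTS) may be singular. This is precisely where Brendle--Wang's work enters. Their proof of the Riemannian PMT in all dimensions handles scalar curvature lower bounds "up to a divergence term" via a perturbation/induction scheme that tolerates singular sets of codimension at least $7$. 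I would apply their result in the form for metrics with $R\ge -2|X|^2+2\Div X$, i.e.\ the conformal-Laplacian/first-eigenvalue version, on the Jang graph. On the cylindrical ends near blow-up regions I would cap off or conformally close, as in Schoen--Yau. The expected output is that $E_{\mathrm{ADM}}(\bar g)\ge 0$, and hence $E_{\mathrm{ADM}}\ge 0$.

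\textbf{Step 3 (rigidity).} Now suppose $E_{\mathrm{ADM}}=|P_{\mathrm{ADM}}|$. I would follow Hirsch--Zhang and Huang--Lee. If $E=|P|>0$, a variational argument deforming the data while keeping the dominant energy condition would lower $E-|P|$ below zero, which is a contradiction; see Huang--Lee. If instead $E=|P|$, one constructs spacetime harmonic functions, or uses the approach of Hirsch--Zhang, to produce a null Killing development. The expected outcome is an isometric embedding into a pp-wave with $\Delta_{\mathbb R^{n-1}}F\le0$, since this superharmonicity is equivalent to the dominant energy condition along the embedding. The converse direction is a direct computation that pp-waves of this form have vanishing $E-|P|$.

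\textbf{Main obstacle.} I expect the hardest step to be Step 2 in dimensions $n\ge 8$. There one must control singular MOTS and Jang blow-up, and verify that Brendle--Wang's Riemannian argument extends to scalar curvature bounds with the vector field term $2\Div X-2|X|^2$. My first attempt would be to absorb this term via the conformal factor solving $-\tfrac{4(n-1)}{n-2}\Delta u+(R-2\Div X+2|X|^2)u\ge0$, using the standard estimate $\int |X|^2\phi^2 - 2\langle X,\nabla\phi\rangle\phi\ge -\int|\nabla\phi|^2$.
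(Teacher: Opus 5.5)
Your overall architecture matches the paper: density and boost to reach $E_{\mathrm{ADM}}<0$, then the Jang graph, then Brendle--Wang. Your completing-the-square estimate is also the right way to turn the Schoen--Yau identity into the weighted inequality \eqref{bw}; this is Lemma~\ref{lem:jang-stability}, with $\rho=1$ in the AF case. However, you wrote the identity with the signs of $|X|^2$ and $\Div X$ reversed. It reads $R_{\bar g}=2(\mu-J(\mathrm{w}))+|h-k|^2_{\bar g}+2|X|^2_{\bar g}-2\Div_{\bar g}X$, and with your signs the absorption fails.

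The genuine gap in Step 2 is that you use Brendle--Wang as a black box that ``tolerates codimension-$7$ singular sets''. In fact \cite[Theorem 1.5]{BrendleWang} applies only to a \emph{complete} $n$-dataset with Schwarzschild asymptotics satisfying \eqref{bw}, and the Jang graph is neither. Getting there requires three things you do not supply.
\begin{itemize}
\item The conformal blow-up of \cite[Section 3.7]{BrendleWang}, which pushes the singular set to infinity, needs \emph{Minkowski}-dimension control of $\Sing(\bar\Sigma)$. The classical regularity theory gives only $\dim_{\mathcal H}\le n-7$. The paper proves $\dim_{\mathcal M}\le n-7$ (Theorem~\ref{thm:Jang}) by checking that the Jang boundary is a perimeter almost-minimizer in the sense of \cite{FocardiMarcheseSpadaro}.
\item The singular set need not be compact. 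When the graph blows up over a singular MOTS, singularities can accumulate along the corresponding cylindrical end, so ``capping off as in Schoen--Yau'' is not available. The paper keeps these ends and splits $\mathcal S$ into compact pieces $\mathcal S_i$ with $d(\mathcal S_i,\mathcal S_j)\ge |i-j|-1$. It then sums the Brendle--Wang barriers, using uniform positivity of $Q$ on the cylinders (Theorem~\ref{thm: singularities cylinder}). A conformal change to zero scalar curvature, as in your last paragraph, is neither needed nor available on the incomplete singular graph.
\item One must glue in an exact Schwarzschild end while preserving the ADM energy and \eqref{bw} (Proposition~\ref{Rg}). This gluing uses $Q\ge\frac{\lambda}{2}r^{-n-1}$, so your density step must produce $\mu-|J|_g\ge\lambda r^{-n-1}$, not merely $\mu>|J|_g$.
\end{itemize}
Separately, in Step 1 the Christodoulou--\'O Murchadha boost needs vacuum data near infinity and does not preserve harmonic asymptotics. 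Density must therefore be applied both before and after the boost, as in Theorem~\ref{thm:reduction}.

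Step 3 contains a false claim. Data with $E_{\mathrm{ADM}}=|P_{\mathrm{ADM}}|>0$ do exist: Huang--Lee constructed asymptotically flat pp-wave slices with this property. So no deformation argument can rule out that case, and this is exactly why the conclusion is a pp-wave rather than Minkowski space. Spacetime harmonic functions are a three-dimensional tool, and the Hirsch--Zhang route to a null parallel field uses spinors, so neither covers non-spin data in all dimensions. The paper instead invokes \cite{HirschHuang}, which combines Huang--Lee's Lagrange multiplier method with a monotonicity formula for causal Killing vectors. That method becomes available once $E_{\mathrm{ADM}}\ge|P_{\mathrm{ADM}}|$ is known in all dimensions. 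For data in $C^5_{\loc}\times C^4_{\loc}$, which $\ell\ge 6$ guarantees, it embeds the equality case into a spacetime with a null parallel vector field. Only then is \cite{HirschZhang2} used, to put that metric into Brinkmann form with $\Delta_{\mathbb R^{n-1}}F\le 0$.
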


\begin{remark}
Planar waves with parallel rays, or pp-waves for short, are Lorentzian manifolds that model gravitational waves.
The superharmonicity of $F$ corresponds to the dominant energy condition $\mu\ge |J|_g$.
In dimensions $n=3$ and $n=4$, there are no nontrivial asymptotically flat pp-waves; that is, in this case one has $F=F(u)$ and $(\mathbf M^{n+1},\mathbf{g})$ is Minkowski space.
More generally, $(M^n,g,k)$ embeds in Minkowski space whenever $(M^n,g,k)$ is $C^{\ell,\alpha}_{-q}$-asymptotically flat with $q>n-1-\ell-\alpha$.
For a detailed overview, we refer to \cite{HirschZhang2} and the references therein.
On the other hand, as shown in \cite{HirschJangZhangHyperboloidal}, there are no asymptotically hyperboloidal analogues of pp-waves in any dimension.
\end{remark}
Both Theorem \ref{thm:main2} and Theorem \ref{thm:main} rely in an essential way on several deep previous results.

\medskip

\noindent To prove Theorem \ref{thm:main2}, we use:
\begin{enumerate}
    \item the existence and regularity theory for the Jang equation on asymptotically hyperboloidal manifolds due to Lundberg \cite{LundbergHyperbolic}, building on the pioneering work of Sakovich \cite{SakovichHyperbolic},
    
    \item the density theorem of Dahl--Sakovich \cite{DahlSakovich}, which is needed in order to apply the above Jang equation results,
    
    \item the all-dimensional Riemannian positive mass theorem of Brendle--Wang \cite{BrendleWang}, applied to the Jang graph,
    
    \item and the rigidity results of Hirsch--Jang--Zhang \cite{HirschJangZhangHyperboloidal} and Huang--Jang--Martin \cite{HuangJangMartin}.
\end{enumerate}

\medskip

\noindent To prove Theorem \ref{thm:main}, we use:
\begin{enumerate}
    \item the existence and regularity theory for the Jang equation on asymptotically flat manifolds due to Eichmair \cite{EichmairJang},
    
    \item the density theorem of Eichmair--Huang--Lee--Schoen \cite{EichmairHuangLeeSchoen}, which is needed in order to apply the above Jang equation theory; moreover, a related density result of \cite{EichmairHuangLeeSchoen} is crucial for applying the boost theorem of Christodoulou--\'O Murchadha \cite{ChristodoulouOMurchadha} in order to reduce the inequality \(E\ge |P|\) to the case \(E\ge 0\),
    
    \item the all-dimensional Riemannian positive mass theorem of Brendle--Wang \cite{BrendleWang}, again applied to the Jang graph,
    
    \item and the rigidity results of Hirsch--Huang \cite{HirschHuang}, Hirsch--Zhang \cite{HirschZhang2}, and Lee--Huang \cite{HuangLee, HuangLee2}.
\end{enumerate}

\medskip

The results listed above themselves rely on a large number of further important developments.  For a more detailed historical overview, we refer to Section \ref{S:history}.

\medskip

An important new observation is the following result, concerning the regularity of
singular Jang graphs. In partciular, we refine the known Hausdorff-codimension-seven 
estimate to a Minkowski-codimension-seven estimate for the singular set.
This is crucially needed in order to apply the methods of 
Brendle--Wang \cite{BrendleWang} within the context of the associated Jang manifold.

\begin{theorem}\label{thm:Jang}
Let $(M^n,g,k)$, $n\geq 3$ be either an asymptotically hyperboloidal or asymptotically flat initial data set, and let $\bar{\Sigma} \subset M^n \times \mathbb R$
be a geometric Jang surface obtained as a subsequential limit of solutions to the capillarity--regularized Jang equation.
Then $\bar{\Sigma}$ is the support of a $C$-almost minimizing boundary in $M^n \times \mathbb R$.
Furthermore, there is a closed singular set
$\Sing(\bar{\Sigma}) \subset \bar\Sigma$ such that $\bar{\Sigma} \setminus \Sing(\bar\Sigma)$ is smooth and its Minkowski dimension satisfies\footnote{We point out that, in order to prove Theorems \ref{thm:main2} and \ref{thm:main}, the weaker estimate $\dim_{\mathcal M} \Sing(\Sigma)< n-4$ would already suffice.}
\[
\dim_{\mathcal M} (\Sing(\bar\Sigma) )\le n-7.
\]
\end{theorem}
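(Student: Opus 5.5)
The plan has three parts: establish the almost-minimizing property, get the Hausdorff bound from standard theory, and then upgrade it to a Minkowski bound by a quantitative stratification argument.

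\textbf{Almost minimizing property.} Let $u_\tau$ solve the capillarity-regularized Jang equation
\[
H(\graph u_\tau)-\tr_{\graph u_\tau}(k)=\tau u_\tau ,
\]
and let $\Sigma_\tau=\graph u_\tau\subset M^n\times\mathbb R$. Following Eichmair \cite{EichmairJang} (and Lundberg \cite{LundbergHyperbolic} in the hyperboloidal case), I would view $\Sigma_\tau$ as the boundary of the subgraph $\Omega_\tau=\{t<u_\tau\}$. Its mean curvature is prescribed by the bounded quantity $\tr(k)+\tau u_\tau$. On compact sets, $\tau u_\tau$ is bounded uniformly in $\tau$ by the barrier and sup estimates; near the asymptotic end, the known asymptotics of $u_\tau$ handle it. A calibration argument then applies: integrate the divergence of the extended unit normal $\nu_\tau$ (which is $t$-invariant) against competitors $F$ with $F\Delta\Omega_\tau\Subset B_r(x)$. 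This gives
\[
\Per(\Omega_\tau;B_r(x))\le \Per(F;B_r(x))+C r^{n+1}
\]
with $C$ independent of $\tau$. By compactness of Caccioppoli sets, the $L^1_{\loc}$ limit $\Omega$ inherits the same inequality. The set $\bar\Sigma$ is exactly $\spt|D\chi_\Omega|$, after including the components of the limit that come from blow-up and blow-down regions. Those components are vertical cylinders over boundaries in $M^n$ that are themselves $C$-almost minimizing, so they fit into the same framework.

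\textbf{Hausdorff bound.} Regularity theory for almost minimizing boundaries (Tamanini, Bombieri) gives that $\bar\Sigma$ is $C^{1,\alpha}$ off a closed set $\Sing(\bar\Sigma)$. Elliptic bootstrapping on the Jang equation then makes $\bar\Sigma$ smooth there. Federer dimension reduction gives $\dim_{\mathcal H}\Sing(\bar\Sigma)\le (n+1)-1-7=n-7$, because tangent cones are area-minimizing cones in $\mathbb R^{n+1}$.

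\textbf{Upgrade to Minkowski dimension.} I would use the Cheeger--Naber quantitative stratification, or more sharply the Naber--Valtorta rectifiable-Reifenberg method, adapted to almost minimizers. The key inputs are:
\begin{enumerate}
\item a monotonicity formula: $e^{Cr}\,r^{-n}\|\partial\Omega\|(B_r(x))$ is almost monotone, with the error controlled by the $Cr$ term coming from almost minimality and the ambient curvature;
\item a quantitative cone-splitting principle;
\item $\varepsilon$-regularity: if $\bar\Sigma$ is sufficiently close in a ball, at all scales below that of the ball, to an $(n-6)$-symmetric cone, then the point is regular.
\end{enumerate}
The last input holds because there are no nonflat minimizing cones of dimension at most $7$ in $\mathbb R^{8}$. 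With these, Naber--Valtorta gives
\[
|B_r(\Sing(\bar\Sigma))\cap B_1|\le C r^{8}
\]
in the $(n+1)$-dimensional ambient space, uniformly on compact sets. This is exactly $\dim_{\mathcal M}\le n-7$. Their argument is written for stationary varifolds and minimizing currents, but it only uses the three inputs above together with compactness of the class under blow-up. Almost minimizers blow up to genuine minimizers, and the error terms scale like $r$, so they are admissible.

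\textbf{Main obstacle.} I expect the main obstacle to be making the monotonicity and compactness quantitative and uniform on all of $\bar\Sigma$. This is delicate near the blow-up and blow-down components, where $\bar\Sigma$ has a cylindrical structure and is unbounded in the $t$-direction. I would use $t$-translation invariance of the cylinders together with local compactness to reduce to finitely many unit balls in a fixed compact region. The singular set is then covered uniformly there, while the asymptotic region contains no singularities by the known asymptotics.
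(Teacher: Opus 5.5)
Your proposal is correct and is essentially the paper's argument. The paper takes the $C$-almost minimizing property from Eichmair \cite[Lemma A.2]{EichmairPlateau}. Instead of rerunning a Cheeger--Naber-type stratification, it observes in a coordinate chart that the almost-minimizing inequality gives $\Per(E;B_r(x))\le \Per(F;B_r(x))+\alpha(r)r^n$ with $\alpha(r)=C\beta_{n+1}r$, where $\beta_{n+1}$ is the volume of the unit ball in $\mathbb R^{n+1}$. Since $\int_0^T\alpha(t)^{1/2}t^{-1}\,dt<\infty$, the abstract Cheeger--Naber-type result \cite[Theorem 6.7]{FocardiMarcheseSpadaro} applies directly. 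The estimate is local, chart by chart, so your uniformity concern along the cylindrical ends never arises.

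One correction to your $\varepsilon$-regularity input: an $(n-6)$-symmetric minimizing cone in $\mathbb R^{n+1}$ is $\mathbb R^{n-6}$ times a minimizing hypercone in $\mathbb R^{7}$. What you need is that minimizing hypercones in $\mathbb R^m$ are flat for $m\le 7$. The reason you stated is false, since the Simons cone is a nonflat $7$-dimensional minimizing cone in $\mathbb R^{8}$.
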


Another difficulty arises from the possibility that the Jang graph blows up at singular MOTS. In such a situation singularities can accumulate along the associated asymptotically cylindrical end, so that the resulting singular set is no longer bounded. We overcome this difficulty by constructing the conformal blow-up, piece by piece along any asymptotically cylindrical ends.

\medskip

Finally, using Theorem \ref{thm:main} together with gluing methods, we give an alternative and shorter proof of Theorem \ref{thm:main2} under additional assumptions.

\medskip

\noindent
\textbf{Acknowledgements:} SH thanks the Simons Foundation and the Banff International Research Center, where part of this work was carried out. MK acknowledges support from NSF Grant DMS-2405045. ML acknowledges the support of Sphere 28 LLC. YZ was partially supported by NSFC Grant No.\ 12501070 and the startup fund from BIMSA. The authors would like to thank Piotr Chru\'sciel and Lan-Hsuan Huang for helpful discussions and interest in this article.

\section{Historical Background}\label{S:history}

\subsection{Asymptotically flat initial data sets}

The spacetime positive mass theorem in the asymptotically flat setting has its
origins in the work of Schoen--Yau and Witten.  Schoen--Yau first proved the
Riemannian (\(k=0\)) case in dimension \(3\) using stable minimal surfaces
\cite{SchoenYau}.  They subsequently treated the general spacetime case in
dimension \(3\) by using Jang's equation to reduce the problem to the
Riemannian positive mass theorem \cite{Jang,SchoenYauII}.  Independently,
Witten introduced a spinorial proof \cite{Witten}, which was later placed on a
rigorous analytic foundation by Parker--Taubes \cite{ParkerTaubes} and
extended to higher-dimensional spin manifolds by Bartnik
\cite{Bartnik0}.

\medskip

The minimal hypersurface approach of Schoen--Yau was subsequently extended to
higher dimensions.  In the Riemannian case, Schoen--Yau proved the positive
mass theorem in dimensions \(3\le n\le 7\), where the relevant stable minimal
hypersurfaces are smooth \cite{SchoenYau79,SchoenYau22}.  
In the spacetime setting,
Eichmair extended the Jang-equation reduction to dimensions \(3\le n \le 7\)
\cite{EichmairJang}. 
A priori, the Jang equation \cite{SchoenYauII,EichmairJang} only yields the
inequality \(E_{ADM}\ge0\).
Eichmair--Huang--Lee--Schoen then gave a direct proof of
the spacetime positive mass theorem \(E_{ADM}\ge|P_{ADM}|\) in dimensions \(3\le n\leq 7\),
replacing minimal hypersurfaces by marginally outer trapped surfaces
\cite{EichmairHuangLeeSchoen}.  Their work also established an important
density theorem.  Together with the boost theorem of
Christodoulou--\'O Murchadha \cite{ChristodoulouOMurchadha}, this density
theorem allows one to reduce the inequality \(E_{ADM}\ge|P_{ADM}|\) to the case \(E_{ADM}\ge0\).

\medskip

In subsequent work, the dimension threshold \(7\) was improved for the
Riemannian positive mass theorem.
Foundational regularity results of Hardt--Simon and Smale addressed aspects of
the dimension \(8\) singularity problem \cite{HardtSimon,Generic8}.  More
recently, Chodosh--Mantoulidis--Schulze proved generic regularity results in
dimensions \(9\) and \(10\) \cite{Generic910}, and
Chodosh--Mantoulidis--Schulze--Wang extended these results to dimension \(11\)
\cite{Generic11}.  Building on conformal blow-up and spectral scalar curvature
ideas, Bi--Hao--He--Shi--Zhu proved the Riemannian positive mass theorem up to
dimension \(19\) \cite{BiHaoHeShiZhu}.  This line of work culminated in the
dimension-descent scheme of Brendle--Wang, which establishes the Riemannian
positive mass theorem in arbitrary dimension \cite{BrendleWang}.

\medskip

Several alternative approaches to the positive mass theorem have also been
developed.  
Hirsch--Kazaras--Khuri introduced spacetime harmonic functions and used them to
prove the spacetime positive mass theorem in dimension \(3\)
\cite{HirschKazarasKhuri}.
In the 3-dimensional Riemannian case, Huisken--Ilmanen obtained
the result as a consequence of weak inverse mean curvature flow and the
Riemannian Penrose inequality \cite{HuiskenIlmanen}.  Li gave a proof using
Ricci flow \cite{LiRicciFlowPMT}.  Bray--Kazaras--Khuri--Stern introduced a
harmonic-function approach based on level sets and the Bochner formula
\cite{BrayKazarasKhuriStern}, while Agostiniani--Mazzieri--Oronzio obtained a
proof using Green's functions and nonlinear potential-theoretic monotonicity
formulas \cite{AgostinianiMazzieriOronzio}.  

\medskip

There have also been many important extensions of the positive mass theorem.
These developments include the Penrose inequality
\cite{HuiskenIlmanen,BrayPenrose}, Penrose-type inequalities
\cite{HerzlichPenrose}, mass-capacity inequalities
\cite{BrayPenrose,AgostinianiMazzieriOronzio}, positive mass theorems with
corners and creases \cite{KazarasKhuriLin, MiaoPMT,ShiTam}, positive mass theorems with arbitrary ends
\cite{LesourdUngerYau}, extensions involving electric and magnetic fields
\cite{BartnikChrusciel,GHHP,GibbonsHull}, spin positive mass theorems under
dominant energy shields \cite{CecchiniLesourdZeidlerShields}, positive mass
theorems with angular momentum \cite{Dain,HKWX,SchoenZhou}, multiple time
dimensions \cite{HirschPayneZhang}, with boundary \cite{HirschMiaoBoundary, McCormickMiaoPenrose}, and low-regularity positive mass theorems
\cite{LeeLeFloch}.

\medskip

The rigidity theory in the asymptotically flat spacetime setting has an equally
rich history.  Witten \cite{Witten} already predicted the possible appearance
of pp-wave spacetimes, and early work on the equality case includes the study
by Yip \cite{Yip}.  Beig--Chru\'sciel \cite{BeigChrusciel} proved in dimension \(3\)
that if \(E_{ADM}=|P_{ADM}|\) and suitable additional decay
assumptions hold, then necessarily \(E_{ADM}=0\).
Moreover, they showed that if \(E_{ADM}=0\), then \((M^n,g,k)\) embeds into Minkowski
spacetime; this was also achieved by Schoen--Yau \cite{SchoenYauII} using the Jang equation.  
Chru\'sciel--Maerten extended their approach to higher-dimensional
spin manifolds in \cite{ChruscielMaerten}. 

\medskip

In a series of papers, Huang--Lee developed the Lagrange multiplier method for
the equality case \cite{HuangLee,HuangLee2,HuangLee3}, and proved rigidity in
the non-spin setting.  Their work builds upon Bartnik's variational
perspective on mass-minimizing initial data \cite{Bartnik} together with the
Beig--Chru\'sciel reduction from \(E_{ADM}=|P_{ADM}|\) to \(E_{ADM}=0\).
In \cite{HuangLee2}, Huang--Lee also discovered asymptotically flat pp-wave
spacetimes satisfying \(E_{ADM}=|P_{ADM}|\) but \(E_{ADM}\neq0\).  In particular, the
Beig--Chru\'sciel reduction result cannot hold in full generality.

\medskip

Subsequently, Hirsch--Zhang \cite{HirschZhang2} proved that spin initial data
sets satisfying \(E_{ADM}=|P_{ADM}|\), without any additional asymptotic assumptions,
embed into pp-wave spacetimes.  This builds upon their earlier work on
rigidity via spacetime harmonic functions \cite{HirschZhang}.  Finally,
combining Huang--Lee's Lagrange multiplier method
\cite{HuangLee,HuangLee2} with a new monotonicity formula for causal Killing
vector fields, Hirsch--Huang \cite{HirschHuang} showed that initial data sets
embed into pp-wave spacetimes also in the non-spin setting.
The rigidity problem in the charged setting remains open in full generality, with partial results due
to Chru\'sciel--Reall--Tod \cite{ChruscielReallTod}.

\subsection{Asymptotically hyperboloidal, asymptotically hyperbolic, and asymptotically AdS initial data sets}

The asymptotically hyperboloidal and asymptotically hyperbolic positive mass
theorems developed in parallel with the asymptotically flat theory.  In the
time-symmetric asymptotically hyperbolic setting, Wang
\cite{WangHyperbolic} and Chru\'sciel--Herzlich
\cite{ChruscielHerzlich} proved positive mass theorems by spinorial methods.
In dimension \(3\), Zhang introduced a definition of total energy--momentum
for asymptotically hyperbolic initial data and proved a corresponding positive
mass theorem \cite{XiaoZhang}.  The structure of conserved quantities in the
asymptotically hyperbolic setting was further developed by
Chen--Wang--Yau \cite{ChenWangYauHyperbolic}, and related hyperboloidal
mass quantities were studied by Chru\'sciel--Jezierski--\L{}\c{e}ski 
\cite{ChruscielJezierskiLeski}.

\medskip

The asymptotically anti-de Sitter case is closely related, but the conserved
quantities are organized by the asymptotic symmetry group of anti-de Sitter
space.  Spinorial positive mass inequalities in this setting were proved by
Maerten \cite{Maerten} and by Chru\'sciel--Maerten--Tod
\cite{ChruscielMaertenTod}, and the rigidity by Hirsch--Zhang \cite{HirschZhang3}.  These results remain fundamentally spinorial, and
the corresponding asymptotically AdS positive mass theorem is still not known
in the non-spin setting. There is also the
volume-renormalized mass introduced by Dahl--Kr\"oncke--McCormick
\cite{DahlKroenckeMcCormick}, whose positivity was subsequently established by Kr\"oncke--Oronzio--Pinoy
 \cite{Kroencke2}.

\medskip

The first non-spinorial asymptotically hyperbolic positive mass theorem was
proved by Andersson--Cai--Galloway in dimensions \(3\le n\le 7\), under a sign
condition on the mass aspect function \cite{AnderssonCaiGalloway}.
Chru\'sciel--Galloway later proved positive mass theorems for asymptotically
hyperbolic manifolds with boundary \cite{ChruscielGallowayBoundary}, and
Chru\'sciel--Delay \cite{ChruscielDelayHyperbolic} showed that the spacetime 
asymptotically flat positive mass theorem implies the asymptotically hyperbolic 
positive mass theorem in all dimensions.
Incarnations of the positive mass theorem have also been achieved in the asymptotically locally
hyperbolic setting with toroidal infinities by Chru\'sciel--Galloway--Nguyen--Paetz \cite{CGNP}, Alaee--Hung--Khuri \cite{AHK}, and Lee--Neves \cite{LN}, while the related Horowitz--Myers conjecture was recently resolved by Brendle--Hung \cite{BrendleHung,BrendleHung1}.


\medskip

For general hyperboloidal initial data, Schoen--Yau already sketched how the
Jang equation method should apply to the positivity of the Bondi mass
\cite{SchoenYauBondi}.  Sakovich carried out this program in dimension \(3\),
adapting the Schoen--Yau reduction to asymptotically hyperboloidal initial data
and obtaining a non-spinorial proof of the hyperboloidal positive mass theorem
\cite{SakovichHyperbolic}.  Lundberg \cite{LundbergHyperbolic} subsequently extended the Jang equation
analysis to higher-dimensional asymptotically hyperboloidal initial data in
dimensions less than 8 .  As in the asymptotically
flat case, the restriction to dimensions below eight reflects the regularity
theory available for the relevant geometric measure theory objects.

\medskip

The rigidity theory in the asymptotically hyperbolic and hyperboloidal settings
is similarly subtle.  In the Riemannian asymptotically hyperbolic case,
rigidity is part of the spinorial theorems of Wang and
Chru\'sciel--Herzlich \cite{WangHyperbolic,ChruscielHerzlich}, and also of the
non-spinorial theorem of Andersson--Cai--Galloway
\cite{AnderssonCaiGalloway}.  Huang--Jang--Martin developed further rigidity
and mass-minimization results in the asymptotically hyperbolic setting
\cite{HuangJangMartin}.  For general asymptotically hyperboloidal initial data
sets which are spin, Hirsch--Jang--Zhang showed that vanishing hyperboloidal mass forces the
data to arise from Minkowski space
\cite{HirschJangZhangHyperboloidal}; in particular, unlike in the
asymptotically flat case, there are no asymptotically hyperboloidal analogues
of pp-waves.

\section{Preliminaries}\label{sec: preliminaries}

\subsection{Asymptotically hyperboloidal initial data sets}

Let
\begin{equation}
b=\frac{dr^2}{1+r^2}+r^2g_{S^{n-1}}
\end{equation}
be the hyperbolic metric on $\mathbb H^n$ in hyperboloidal polar coordinates, where $g_{S^{n-1}}$ denotes the standard round metric on $S^{n-1}$.

\begin{definition}[Asymptotically hyperboloidal initial data sets]\label{Def:AH}
Let $(M^n,g)$, $n\geq 3$ be a connected, complete Riemannian manifold without boundary,
and let $k$ be a symmetric 2-tensor on $M^n$.
Fix $\ell\ge 6$, $\alpha\in(0,1)$, $\tau\in\bigl(\tfrac n2,n\bigr)$, and $\tau_0>0$.
We say that $(M^n,g,k)$ is an \emph{asymptotically hyperboloidal initial data set}
of type $(\ell,\alpha,\tau,\tau_0)$ if there exists a compact set
$\mathcal C\subset M^n$ and a diffeomorphism
\begin{equation}
\varphi:M^n_{\mathrm{end}}:=M^n\setminus \mathcal C\to \mathbb H^n\setminus \overline{B}
\end{equation}
such that in the corresponding asymptotic coordinates
\begin{equation}
(\varphi_*g-b,\varphi_*(k-g))
\in
C^{\ell,\alpha}_{-\tau}(\mathbb H^n\setminus \overline{B})
\times
C^{\ell-1,\alpha}_{-\tau}(\mathbb H^n\setminus \overline{B}),
\end{equation}
and
\begin{equation}
\varphi_*\mu,\;\varphi_*J\in C^{\ell-2,\alpha}_{-n-\tau_0}(\mathbb H^n\setminus \overline{B}),
\end{equation}
where $\overline{B}$ is the closure of a coordinate ball, and the energy density $\mu$ and momentum density $J$ are given by
\begin{equation}
\mu:=\frac12\Bigl(R_g+(\tr_g k)^2-|k|_g^2\Bigr),
\qquad
J:=\Div_g\bigl(k-(\tr_g k)g\bigr).
\end{equation}
\end{definition}

For the definitions of weighted H\"older space in the hyperbolic and Euclidean settings, see \cite[Definition 2.1]{HirschJangZhangHyperboloidal,HirschZhang2}.

\begin{definition}[Dominant energy condition]\label{Def:DEC}
We say that $(M^n,g,k)$ satisfies the \emph{dominant energy condition} if
\begin{equation}
\mu\ge |J|_g
\end{equation}
holds pointwise on $M^n$.
\end{definition}

The strict inequality upper bound for $\tau$ in Definition \ref{Def:AH} is included for certain density results, however, in some contexts it is important to allow $\tau=n$. In particular, we will slightly abuse terminology in the next definition due to this strength of decay.

\begin{definition}[Wang asymptotics]\label{Def:AH-Wang}
Let $(M^n,g,k)$ be asymptotically hyperboloidal of type $(\ell,\alpha,\tau=n,\tau_0)$.
We say that $(M^n,g,k)$ has \emph{Wang asymptotics} if in the asymptotic end
\begin{align}
\begin{split}
\varphi_*g-b &=\frac{\mathbf{m}}{r^{n-2}}+\mathcal{O}_{\ell,\alpha}(r^{-n-1}),\\
\left(\varphi_*k-b)\right|_{TS_r\times TS_r}
&=\frac{\mathbf{p}}{r^{n-2}}+\mathcal{O}_{\ell-1,\alpha}(r^{-n-1}),
\end{split}
\end{align}
where
\begin{equation}
\mathbf{m},\mathbf{p}\in C^{\ell,\alpha}\bigl(S^{n-1};\operatorname{Sym}^2(T^*S^{n-1})\bigr),
\end{equation}
and $\mathcal{O}_{\ell,\alpha}(r^{-n-1})$ represents a symmetric
2-tensor in the weighted space $C^{\ell,\alpha}_{n+1}(\mathbb{H}^n \setminus\overline{B})$ that vanishes in the radial direction. 
\end{definition}

Let us recall the notion of of total energy-momentum in the asymptotically hyperboloidal setting. Set
$\mathcal N:=\bigl\{V\in C^\infty(\mathbb H^n): \operatorname{Hess}^b V=Vb\bigr\}$ and observe that
\begin{equation}
\mathcal N=\operatorname{span}\{V^{(0)},V^{(1)},\dots,V^{(n)}\},
\end{equation}
where
$V^{(0)}=\sqrt{1+r^2}$, and $V^{(i)}=x^i$ for $i=1,\dots,n$. Here $x^i$ denote Cartesian coordinates on $\mathbb{R}^n$, and in this setting may be viewed as the restriction of the coordinates of Minkowski space restricted to the upper unit hyperboloid given by $t =\sqrt{1+r^2}$. The definition of total energy-momentum arises from a correspondence between the isometries of Minkowski space that preserve the hyperboloid, and functions in $\mathcal{N}$, see \cite[Section 2.2]{DahlSakovich}.
For asymptotically hyperboloidal initial data sets, the following notion of total energy and momentum is well-defined and is a geometric invariant \cite{ChruscielJezierskiLeski,Michel}.

\begin{definition}[Asymptotically hyperboloidal energy--momentum]\label{Def:AH-mass}
Given asymptotically hyperboloidal initial data $(M^n,g,k)$ with chart $\varphi$ at infinity, let
\begin{equation}
e:=\varphi_*g-b,
\qquad\quad
\eta:=\varphi_*(k-g),
\end{equation}
and consider the \textit{mass functional} $\mathcal{M}_{\varphi}:\mathcal{N}\rightarrow\mathbb{R}$ prescribed by
\begin{equation*}
\mathcal{M}_{\varphi}(V)
:=
\lim_{r\to\infty}
\int_{S_r}
\Big(
V(\Div_b e-d\tr_b e)
+\tr_b(e+2\eta)\,dV
-(e+2\eta)(\nabla^b V,\cdot)
\Big)(\nu^b)\,dA.
\end{equation*}
Here $S_r\subset \mathbb H^n$ is a coordinate sphere of radius $r$ having unit outer normal $\nu^b$ with respect to $b$, and $dA$ is the induced hypersurface measure.
The asymptotically hyperboloidal \textit{total energy--momentum} vector is then set as
\begin{equation}
E:=\frac{\mathcal{M}_{\varphi}(V^{(0)})}{2(n-1)\omega_{n-1}},
\qquad\quad
P^i:=\frac{\mathcal{M}_{\varphi}(V^{(i)})}{2(n-1)\omega_{n-1}},
\quad i=1,\dots,n,
\end{equation}
where $\omega_{n-1}$ denotes the volume of the unit $(n-1)$-sphere.
We also write $P=(P^1,\dots,P^n)$.
\end{definition}

It should be noted that in the case of Wang asymptotics, the total energy and momentum admit the following explicit formulas
\begin{equation}
E
=
\frac{1}{(n-1)\omega_{n-1}}
\int_{S^{n-1}}
\left(
\tr_{g_{S^{n-1}}}\mathbf{p}
+\frac{n-2}{2}\tr_{g_{S^{n-1}}}\mathbf{m}
\right)\,dA,
\end{equation}
and
\begin{equation}
P^i
=
\frac{1}{(n-1)\omega_{n-1}}
\int_{S^{n-1}}
\left(
\tr_{g_{S^{n-1}}}\mathbf{p}
+\frac{n-2}{2}\tr_{g_{S^{n-1}}}\mathbf{m}
\right)x^i\,dA,
\end{equation}
for $i=1,\dots,n$.

\subsection{Asymptotically flat initial data sets}\label{subsec:AF}

We recall the standard definition of asymptotically flat initial data sets and
their ADM energy--momentum.

\begin{definition}[Asymptotically flat initial data sets]\label{Def:AF}
Let $(M^n,g)$, $n\geq 3$ be a connected, complete Riemannian manifold without boundary,
and let $k$ be a symmetric 2-tensor on $M^n$.
Fix $\ell\ge 6$, $\alpha\in(0,1)$, $q\in\left(\frac{n-2}{2},\,n-2\right)$, and $q_0>0$.
We say that $(M^n,g,k)$ is an \emph{asymptotically flat initial data set}
of type $(\ell,\alpha, q, q_0)$ if there exists a compact set
$\mathcal C\subset M^n$ and a diffeomorphism
\begin{equation}
\varphi:M^n_{\mathrm{end}}:=M^n\setminus \mathcal C\to \mathbb R^n\setminus \overline{B}
\end{equation}
onto the complement of a Euclidean ball, such that in the corresponding
asymptotic Cartesian coordinates
\begin{equation}
\bigl(\varphi_*g-\delta,\varphi_*k\bigr)
\in
C^{\ell,\alpha}_{-q}(\mathbb R^n\setminus \overline{B})
\times
C^{\ell-1,\alpha}_{-q-1}(\mathbb R^n\setminus \overline{B}),
\end{equation}
and
\begin{equation}
\varphi_\ast \mu,\; \varphi_\ast J\in C^{0,\alpha}_{-n-q_0}(M^n),
\end{equation}
where $\delta$ denotes the Euclidean metric.
\end{definition}

The following notion of total energy and momentum for asymptotically flat initial data sets is well-defined and is a geometric invariant \cite{Bartnik0,Chrusciel}.

\begin{definition}[ADM energy and momentum]\label{Def:ADM}
Let $(M^n,g,k)$ be an asymptotically flat initial data set, and fix asymptotic coordinates
$x=(x^1,\dots,x^n)$ in the single end.
Let $\nu$ and $dA$ denote respectively the outward Euclidean unit normal and
Euclidean hypersurface measure on the coordinate sphere $S_r$.
Then the \emph{ADM energy} and \emph{ADM linear momentum} are given by
\begin{equation}
E_{\mathrm{ADM}}
:=
\frac{1}{2(n-1)\omega_{n-1}}
\lim_{r\to\infty}
\int_{S_r}\sum_{i,j=1}^n
(\partial_i g_{ij}-\partial_j g_{ii})\,\nu^j\,dA,
\end{equation}
and
\begin{equation}
P_{\mathrm{ADM}}^i
:=
\frac{1}{(n-1)\omega_{n-1}}
\lim_{r\to\infty}
\int_{S_r}\sum_{j=1}^n
\pi_j^i\,\nu^j\,dA,
\qquad i=1,\dots,n,
\end{equation}
where $\pi:=k-(\tr_g k)g$ is the conjugate momentum tensor.
We will write $P_{\mathrm{ADM}}=(P_{\mathrm{ADM}}^1,\dots,P_{\mathrm{ADM}}^n)$, and if $E_{\mathrm{ADM}}\ge |P_{\mathrm{ADM}}|$ then the corresponding \textit{ADM mass} is given by
\begin{equation}
m_{\mathrm{ADM}}:=\sqrt{E^2_{\mathrm{ADM}}-|P_{\mathrm{ADM}}|^2}.
\end{equation}
\end{definition}

\begin{remark}\label{remark stronger decay}
Typically, one assumes only $\ell\ge 2$ in the definitions of asymptotically
hyperboloidal and asymptotically flat initial data sets.
By imposing the stronger assumption $\ell\ge 6$, we are able to directly invoke
the results in \cite{HuangLee2, LundbergHyperbolic, BrendleWang, HirschHuang}.
Apart from this, the condition $\ell\ge 6$ is not used in the present work.
In addition, we impose stronger decay on $\mu$ and $J$ beyond being in $L^1(M^n)$, which is
required for \cite{EichmairHuangLeeSchoen,LundbergHyperbolic}.
\end{remark}

\section{Reduction to $E\ge 0$}\label{sec:m->E reduction}

\subsection{The asymptotically hyperboloidal case}

We show that, in the asymptotically hyperboloidal setting, it suffices to prove
nonnegativity of the energy in an arbitrary asymptotic chart. This is based on
a classical boost argument, see for instance \cite[Section~3.3]{HirschZhang3}.

\medskip

Let $\varphi:M_{\mathrm{end}}^n\to \mathbb H^n\setminus \overline{B}$ be the chosen
asymptotic chart, and let
\begin{equation}
(E,P)=(E,P^1,\dots,P^n)
\end{equation}
be the corresponding energy--momentum vector from
Definition~\ref{Def:AH-mass}. Recall that in hyperboloidal coordinates on
$\mathbb H^n\subset \mathbb R^{1,n}$ we have
\begin{equation}
t=\sqrt{1+r^2},
\qquad
x=(x^1,\dots,x^n),
\end{equation}
and
\begin{equation}
    E=\frac{\mathcal{M}_{\varphi}(t)}{2(n-1)\omega_{n-1}},
\qquad
P^i=\frac{\mathcal{M}_{\varphi}(x^i)}{2(n-1)\omega_{n-1}},
\end{equation}
where $\mathcal{M}_{\varphi}$ denotes the mass functional associated with $\varphi$.

\begin{proposition}\label{prop:AH-boost}
Let $\Phi\in SO(1,n)$ be a hyperbolic isometry, and define a new asymptotic
chart by
\[
\mathring\varphi:=\Phi\circ \varphi.
\]
Then the new energy--momentum vector satisfies
\[
(\mathring E,\mathring P)=\Phi(E,P).
\]
In particular, after a spatial rotation we may assume that
\[
P^1=|P|,
\qquad
P^i=0,\quad i=2,\dots,n.
\]
If additionally $E>|P|$, then the boost
\[
\Phi=
\begin{pmatrix}
\cosh\theta & \sinh\theta & 0 \\
\sinh\theta & \cosh\theta & 0 \\
0 & 0 & I_{n-1}
\end{pmatrix},
\quad
\cosh\theta=\frac{E}{\sqrt{E^2-|P|^2}},
\quad
\sinh\theta=-\frac{|P|}{\sqrt{E^2-|P|^2}},
\]
yields
\[
\mathring E=\sqrt{E^2-|P|^2},
\qquad
\mathring P=0.
\]
\end{proposition}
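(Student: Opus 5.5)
The proof rests on the equivariance of the mass functional under isometries of $\mathbb H^n$. First, view $\mathcal M_\varphi$ as a linear functional on $\mathcal N$, which we identify with $\mathbb R^{1,n}$ by sending $V=a_0V^{(0)}+\sum_i a_iV^{(i)}$ to the vector $(a_0,a)$. Under this identification $V$ is the restriction to the hyperboloid of the linear function $X\mapsto \langle (a_0,-a),X\rangle$ on Minkowski space, up to sign conventions. An isometry $\Phi\in SO(1,n)$ of $\mathbb H^n$ acts on $\mathcal N$ by pullback $V\mapsto V\circ\Phi$. This preserves $\mathcal N$, since $\operatorname{Hess}^b$ commutes with isometries, and it corresponds to the linear action of $\Phi$ (or its transpose with respect to the Minkowski form) on coefficient vectors.

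The key step is the transformation law
\[
\mathcal M_{\Phi\circ\varphi}(V)=\mathcal M_{\varphi}(V\circ\Phi).
\]
To prove it, note that $(\Phi\circ\varphi)_*g-b=\Phi_*(\varphi_*g-b)$ because $\Phi_*b=b$, and similarly for $\eta$. The integrand in Definition \ref{Def:AH-mass} is built covariantly from $b$, $e$, $\eta$ and $V$. Pulling back by $\Phi$, the integral of $\Phi_*$-data against $V$ over $S_r$ therefore equals the integral of the original data against $V\circ\Phi$ over $\Phi^{-1}(S_r)$.

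The main obstacle is that $\Phi^{-1}(S_r)$ is not a coordinate sphere, so one must show the limit is independent of the exhausting family of hypersurfaces. I would handle this by the standard argument. Using $\operatorname{Hess}^bV=Vb$, the integrand $\mathbb U(V,e,\eta)$ has divergence equal to the linearization of the constraint quantities $2(\mu$-type$)V+\langle J,\nabla V\rangle$ terms, which lie in $L^1$ by the decay $\mu,J\in C_{-n-\tau_0}$, plus quadratic terms in $(e,\eta)$. Because $\tau>n/2$, the quadratic terms are $O(r^{-2\tau})$ against $V=O(r)$ with volume growth $r^{n-1}\,dr$ in the $r$-variable, and are therefore integrable. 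The divergence theorem on the region between $S_r$ and $\Phi^{-1}(S_{r'})$ then shows the two limits agree. Alternatively, I would simply cite the geometric invariance result \cite{ChruscielJezierskiLeski,Michel}, which states precisely this equivariance.

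With the transformation law established, the linear algebra is immediate. It gives $(\mathring E,\mathring P)=\Phi(E,P)$ after checking the identification: we have $V^{(0)}\circ\Phi$, $V^{(i)}\circ\Phi$ equal to the components of $\Phi$ applied to $(t,x)$, and linearity of $\mathcal M_\varphi$ in $V$ then transfers this to $(E,P)$. Choosing $\Phi$ a spatial rotation in $SO(n)\subset SO(1,n)$ rotates $P$ to $|P|e_1$. For the boost, a direct computation gives
\[
\mathring E=\cosh\theta\,E+\sinh\theta\,|P|=\frac{E^2-|P|^2}{\sqrt{E^2-|P|^2}}=\sqrt{E^2-|P|^2},
\]
and
\[
\mathring P^1=\sinh\theta\,E+\cosh\theta\,|P|=0,
\]
while the remaining components of $\mathring P$ stay zero. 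The choice of $\theta$ is consistent because $\cosh^2\theta-\sinh^2\theta=1$, and this requires $E>|P|$.
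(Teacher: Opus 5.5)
Your proposal is correct and follows essentially the same route as the paper: the transformation law $\mathcal M_{\Phi\circ\varphi}(V)=\mathcal M_\varphi(V\circ\Phi)$, then linearity of $\mathcal M_\varphi$ on $\mathcal N$, then the explicit boost computation. Your extra argument that the limit does not depend on the exhaustion is a worthwhile supplement, since a boost does not map coordinate spheres to coordinate spheres and the paper passes over this with ``the integrands are preserved''; however, $dV_b\sim r^{n-2}\,dr\,dA_{S^{n-1}}$, not $r^{n-1}\,dr$, so the quadratic terms contribute $r^{-2\tau}\cdot r\cdot r^{n-2}=r^{n-1-2\tau}$, which is integrable exactly when $\tau>n/2$, whereas your stated volume growth would require $\tau>(n+1)/2$.
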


\begin{proof}
Since $\Phi$ is an isometry of $(\mathbb H^n,b)$, the composition
$\mathring\varphi=\Phi\circ\varphi$ is again an admissible asymptotic chart. Furthermore
if $V\in\mathcal N$, then the mass
functional computed in the new chart satisfies
\begin{equation}
    \mathcal{M}_{\mathring\varphi}(V)=\mathcal{M}_{\varphi}(V\circ\Phi).
\end{equation}
Indeed, the integrands in the flux formula are preserved by the isometry
$\Phi$, and hence the corresponding limits agree. Now let $\mathring t:=t\circ\Phi$ and
$\mathring x^i:=x^i\circ\Phi$, and observe that
\begin{align}
\begin{split}
    \mathring E &=\frac{\mathcal{M}_{\mathring\varphi}(t)}{2(n-1)\omega_{n-1}}
=\frac{\mathcal{M}_{\varphi}(\mathring t)}{2(n-1)\omega_{n-1}},\\
\mathring P^i &=\frac{\mathcal{M}_{\mathring\varphi}(x^i)}{2(n-1)\omega_{n-1}}
=\frac{\mathcal{M}_{\varphi}(\mathring x^i)}{2(n-1)\omega_{n-1}}.
\end{split}
\end{align}
Since $\mathcal{M}_{\varphi}$ is linear on $\mathcal N$, this shows that the
energy--momentum vector transforms by the Lorentz transformation $\Phi$.
For the explicit boost, we compute
\begin{equation}
    \mathring t
=
t\cosh\theta+x^1\sinh\theta
=
t\,\frac{E}{\sqrt{E^2-|P|^2}}
-
x^1\,\frac{|P|}{\sqrt{E^2-|P|^2}},
\end{equation}
\begin{equation}
    \mathring x^1
=
x^1\cosh\theta+t\sinh\theta
=
x^1\,\frac{E}{\sqrt{E^2-|P|^2}}
-
t\,\frac{|P|}{\sqrt{E^2-|P|^2}},
\end{equation}
and $\mathring x^i=x^i$ when $i=2,\dots,n$. Therefore
\begin{equation}
    \mathring E
=
\frac{\mathcal{M}_{\varphi}(\mathring t)}{2(n-1)\omega_{n-1}}
=
\frac{E^2-|P|^2}{\sqrt{E^2-|P|^2}}
=
\sqrt{E^2-|P|^2},
\end{equation}
while
\begin{equation}
    \mathring P^1
=
\frac{\mathcal{M}_{\varphi}(\mathring x^1)}{2(n-1)\omega_{n-1}}
=
\frac{E|P|-E|P|}{\sqrt{E^2-|P|^2}}
=
0,
\end{equation}
and clearly $\mathring P^i=0$ for $i=2,\dots,n$.
\end{proof}

As a consequence of Proposition \ref{prop:AH-boost}, we find that in order to prove the inequality portion of Theorem~\ref{thm:main2}, it is enough to establish nonnegativity of the energy in every admissible asymptotic chart.

\begin{corollary}\label{cor:AH-reduction}
Given an asymptotically hyperboloidal initial data set satisfying
the dominant energy condition, assume that for every admissible asymptotic chart the corresponding total energy is nonnegative. Then the full energy--momentum inequality
\[
E\ge |P|
\]
holds.
\end{corollary}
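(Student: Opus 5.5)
We argue by contradiction and use Proposition \ref{prop:AH-boost} to move the energy--momentum vector around by Lorentz transformations. By assumption, $\mathcal M_{\mathring\varphi}(t)\ge 0$ for every admissible chart $\mathring\varphi$. Every chart of the form $\Phi\circ\varphi$ with $\Phi\in SO(1,n)$ (more precisely, the component of $O(1,n)$ preserving the upper hyperboloid) is again admissible, since $\Phi$ is an isometry of $(\mathbb H^n,b)$ and hence preserves the weighted H\"older decay conditions of Definition \ref{Def:AH}. The proposition then gives $\mathring E=(\Phi(E,P))^0\ge 0$ for all such $\Phi$.

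The inequality follows from the elementary fact that a vector in $\mathbb R^{1,n}$ whose time component stays nonnegative under every orthochronous Lorentz transformation lies in the closed future causal cone. First rotate spatially so that $P=(|P|,0,\dots,0)$; rotations are in $SO(1,n)$, so this is allowed. For the boost with parameter $\theta\in\mathbb R$ in the $x^1$ direction, the new energy is
\[
\mathring E(\theta)=E\cosh\theta+|P|\sinh\theta,
\]
as computed in the proof of Proposition \ref{prop:AH-boost}. Hence $\mathring E(\theta)\ge 0$ for all $\theta$.

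Suppose $E<|P|$. Take $\theta\to-\infty$, where $\cosh\theta\approx -\sinh\theta\approx e^{-\theta}/2$. Then $\mathring E(\theta)\approx \tfrac12 e^{-\theta}(E-|P|)\to-\infty$, a contradiction. Alternatively, pick $\theta$ with $\tanh\theta=-\frac{E+|P|}{2|P|}\in(-1,0]$; this is possible if $E\ge 0$, which holds by taking $\theta=0$. For this $\theta$,
\[
\mathring E=\cosh\theta\,\bigl(E+|P|\tanh\theta\bigr)=\cosh\theta\,\frac{E-|P|}{2}<0,
\]
again a contradiction. Therefore $E\ge|P|$.

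The one point needing care is that $\Phi\circ\varphi$ is an admissible chart: its image must again be the complement of a closed coordinate ball and the decay rates must be preserved. Boosts map a coordinate ball to a compact set that is not a centered ball. This is handled by enlarging the compact set $\mathcal C$ so that the image of the end is contained in the complement of a larger centered ball. The weighted norms are defined via $b$-invariant quantities, and $\Phi$ changes $r$ only by a bounded factor, so the decay classes are preserved. The remaining steps are the explicit computation above.
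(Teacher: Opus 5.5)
Your proof is correct and follows essentially the same route as the paper: you argue by contradiction and use Proposition \ref{prop:AH-boost} to produce an admissible chart $\Phi\circ\varphi$ in which the energy is negative. The paper only asserts that a unit future timelike $a$ with $a_0E+\sum_i a_iP^i<0$ exists and takes $\Phi$ with first row $a$; you instead construct it explicitly as a rotation followed by a boost with $\theta\to-\infty$, and you also justify the admissibility of $\Phi\circ\varphi$, which the paper takes for granted.
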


\begin{proof}
Suppose, by way of contradiction, that $(E,P)$ is not future causal, that is $E<|P|$. Then there exists a future unit timelike vector
\begin{equation}
a=(a_0,a_1,\dots,a_n)\in \mathbb R^{1,n},
\qquad
a_0>0,
\qquad
a_0^2-\sum_{i=1}^n a_i^2=1,
\end{equation}
such that
\begin{equation}
a_0E+\sum_{i=1}^n a_iP^i<0.
\end{equation}
Choose $\Phi\in SO(1,n)$ whose first row is $a$, and let $\mathring\varphi=\Phi\circ\varphi$.
By Proposition~\ref{prop:AH-boost}, the corresponding energy satisfies
\begin{equation}
    \mathring E=a_0E+\sum_{i=1}^n a_iP^i<0,
\end{equation}
contradicting the assumed nonnegativity of the energy in every chart. Hence $E\ge |P|$.
\end{proof}

An important component in the proof of Theorem~\ref{thm:main2} is the following density result
from Dahl-Sakovich \cite{DahlSakovich}, concerning perturbations to Wang asymptotics and a strict dominant
energy condition with controlled fall-off.

\begin{theorem} \label{DS density}
    Let $(M^n,g,k)$ be an asymptotically hyperboloidal initial data set of type $(\ell,\alpha,\tau,\tau_0)$ satisfying the dominant energy condition. Then for every $\varepsilon>0$ there exists an initial data set $(M^n,g',k')$ of type $(\ell-1,\alpha,n,\tau'_0)$ with Wang asymptotics, a strict dominant energy condition $\mu'>|J'|_{g'}$,
     and such that its energy--momentum vector satisfies
    \begin{equation*}
        |E'-E|+|P'-P|< \varepsilon.
    \end{equation*}
    Moreover, there exists $\lambda>0$ such that
    \begin{equation*}
        \mu'-|J'|_{g'}\geq\lambda r^{-n-1}\quad\text{ on }M^n_{\mathrm{end}}.
    \end{equation*} 
\end{theorem}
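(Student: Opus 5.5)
This is the Dahl--Sakovich density theorem, so the natural approach is to retrace their conformal-and-momentum deformation argument. I would proceed in three steps.

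\emph{Step 1: cut off at infinity.} First replace $(g,k)$ outside a large coordinate ball $B_R$ by data with Wang asymptotics. Write $g=b+e$ and $k=g+\eta$ on the end. Using a cutoff $\chi_R$, set
\[
\tilde g=b+\chi_R e,\qquad \tilde k=\tilde g+\chi_R\eta .
\]
This makes the data exactly hyperboloidal (umbilic, $k=g$) near infinity. It does so at the cost of creating constraint violations $(\tilde\mu,\tilde J)$ supported in the annulus $R<r<2R$. Because $\tau>n/2$, these violations are small in the weighted $L^2$-type norms relevant to the linearized constraint operator.

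\emph{Step 2: solve the constraints.} Next, solve for corrections of the form
\[
g'=u^{4/(n-2)}\tilde g,\qquad k'=\tilde k+\mathcal L_X\tilde g-(\Div X)\tilde g+\dots
\]
following the Corvino--Schoen / Eichmair--Huang--Lee--Schoen scheme adapted to hyperbolic weights. The pair $(u,X)$ should solve a system prescribing
\[
\mu'=\mu_R+\lambda\phi,\qquad J'=J_R,
\]
where $\phi$ is a fixed positive function behaving like $r^{-n-1}$ and $\lambda>0$ is small. Here $\mu_R,J_R$ are cutoff versions of the original densities, chosen so that $\mu_R\ge|J_R|$. I would use the fact that the linearized constraint map on $\mathbb H^n$ (the Laplacian shifted by $n$ for $u$, the vector Laplacian for $X$) is an isomorphism on weighted H\"older spaces with weights in $(0,n)$. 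An implicit function argument then gives $u-1,X=O(r^{-n})$ whose leading terms are exactly of the form $\mathbf m r^{2-n}$, $\mathbf p r^{2-n}$. This is the Wang asymptotics; it uses $\tau=n$ and loses one derivative, which accounts for $\ell-1$.

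\emph{Step 3: continuity of the energy--momentum.} Finally, show that $E',P'$ converge to $E,P$ as $R\to\infty$ and $\lambda\to0$. The mass functional $\mathcal M_\varphi(V)$ is expressed, via the divergence theorem, as an integral of the constraints against $V$ plus quadratic terms. Since $V=O(r)$, the $\lambda\phi V$ contribution is $O(\lambda)$ because $\phi V$ is integrable. The error from the cutoff tends to zero as $R\to\infty$ by the decay $\tau>n/2$ and $\tau_0>0$.

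The main obstacle is Step 2: ensuring the strict inequality $\mu'-|J'|\ge\lambda r^{-n-1}$ while keeping $J'$ controlled. Scaling $J$ by the conformal factor breaks the pointwise inequality. I would therefore first try the Eichmair--Huang--Lee--Schoen modified constraint operator, with $J$ transforming compatibly and $|J'|_{g'}$ computed in $g'$. The right-hand side should then be chosen as $\mu'=u^{-\text{power}}(\mu_R+\lambda\phi)$ so that the inequality survives.
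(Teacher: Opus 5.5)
Your strategy is the Dahl--Sakovich deformation scheme, which is also what the paper relies on. The paper, however, uses it in stages:
\begin{itemize}
\item \cite[Theorem 5.3]{DahlSakovich} first gives Wang asymptotics together with $\mu>|J|_g$.
\item The Dahl--Sakovich strictness deformation is then run with the weight $\mathfrak w=r^{-n-1}$. This gives $(1+tv)^{\frac{4}{n-2}}\mu'>\mu+\frac t3\mathfrak w$ and $(1+tv)^{\frac{4}{n-2}}|J'|_{g'}<|J|_g+\frac t4\mathfrak w$ with $v\in C^{\ell-1,\alpha}_{-n}$, hence $\mu'-|J'|_{g'}>\frac{t}{12}(1+tv)^{-\frac{4}{n-2}}\mathfrak w\ge\lambda r^{-n-1}$.
\item Finally, \cite[Theorem 5.2]{DahlSakovich} supplies a chart in which $(g',k')$ has Wang asymptotics.
\end{itemize}
Doing everything in one cut-off-and-solve step is plausible. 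Your remark that $\phi V\sim r^{-n}$ is integrable against $dV_b\sim r^{n-2}\,dr$ is exactly why $r^{-n-1}$ is an admissible weight. But two steps of your sketch do not go through as written.

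First, Wang asymptotics are not an output of the implicit function theorem. On weights in $(0,n)$ it only gives $u-1,X\in C^{2,\alpha}_{-\delta}$ with $\delta<n$. The $r^{-n}$ expansion needs a separate asymptotic analysis in the region where the background is exactly $(b,b)$. More seriously, suppose you grant $u-1=a(\theta)r^{-n}+O(r^{-n-1})$. Then $g'=u^{4/(n-2)}b$ satisfies $g'-b=\frac{4a}{n-2}r^{-n}\bigl(\frac{dr^2}{1+r^2}+r^2g_{S^{n-1}}\bigr)+\dots$, which has a nonvanishing radial component at order $r^{-n}$ (in general $a\not\equiv0$). Definition~\ref{Def:AH-Wang}, by contrast, requires $g'-b$ to vanish in the radial direction: the leading term $\mathbf m r^{2-n}$ is a tensor on $S^{n-1}$ and the remainder is tangential. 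You therefore have to pass to a new asymptotic chart. This means normalizing the radial coordinate at infinity, which also reshuffles the tangential coefficients of $g'$ and $k'$. That is precisely the role of \cite[Theorem 5.2]{DahlSakovich} in the paper, and your claim ``This is the Wang asymptotics'' skips it.

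Second, the strict dominant energy step, which you yourself flag as the main obstacle, is not carried out. You name the right tool: the Eichmair--Huang--Lee--Schoen modified constraint operator, which removes the first-order change of $|J|$ caused by changing the metric and leaves a quadratic error for $\lambda\phi$ to dominate. But you still have to exhibit a deformation and a target for which an inequality of the type displayed above actually holds. You also need invertibility of the linearization at the cut-off data, not only at $(b,b)$.

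Relatedly, your displayed ansatz must move $k$ together with $g$. If $g'=u^{4/(n-2)}b$ while $k'$ stays equal to $\tilde k=b$ near infinity, then for $u=1+w$ the linearization of $2\mu'$ is $-\frac{4(n-1)}{n-2}(\Delta_b+n)w$, not $-\frac{4(n-1)}{n-2}(\Delta_b-n)w$. Its indicial roots solve $s^2-(n-1)s+n=0$ rather than $s^2-(n-1)s-n=0$ (whose roots are $-1$ and $n$). Both the weight range $(0,n)$ and the $r^{-n}$ decay you rely on would then be lost. Deforming $k-g$ instead, so that $k'=g'+\dots$, produces the operator $\Delta_b-n$ you want.
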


\begin{proof}
The proof closely follows the arguments in \cite[Theorems 3.1 \& 5.2]{DahlSakovich}.
By \cite[Theorem 5.3]{DahlSakovich}, after a perturbation, we may assume that $(M^n,g,k)$ is an initial data set of type $(\ell-1,\alpha,n,\tau_0')$ with Wang asymptotics, and $\mu>|J|_g$.
Choose a bounded positive function $\mathfrak w$ such that $\mathfrak w=r^{-n-1}$ near infinity. Then there exists $(g', k')$ satisfying, by \cite[equations (23) and (24)]{DahlSakovich}, the following properties: the new energy--momentum is $\varepsilon$-close to that of $(g,k)$, and
\begin{equation}
(1+tv)^{\frac{4}{n-2}}\mu'>\mu+\frac{t}{3}\mathfrak w,\qquad (1+tv)^{\frac{4}{n-2}}|J'|_{g'}<|J|_g+\frac{t}{4}\mathfrak w,
\end{equation}
where $v\in C^{\ell-1,\alpha}_{-n}$ 
and $t>0$ is sufficiently small. Moreover, according to \cite[Theorem 5.2]{DahlSakovich}, there exists a coordinate chart such that $(g',k')$ has Wang asymptotics. 
\end{proof}

\subsection{The asymptotically flat case}

Similarly to the asymptotically hyperboloidal case, a perturbation to strict dominant energy condition
with controlled fall-off, and harmonic asymptotics, will play an important role in the proof of Theorem \ref{thm:main2}.
Let $\pi:=k-(\tr_g k)\,g$, and recall that an asymptotically flat initial data set $(M^n,g,k)$ has \textit{harmonic asymptotics} on $M_{\mathrm{end}}^n$ if 
\begin{equation}
g_{ij}=u^{\frac{4}{n-2}}\delta_{ij},
\qquad
\pi_{ij}
=
u^{\frac{2}{n-2}}
\Big(
(L_\delta Y)_{ij}
-
(\operatorname{div}_\delta Y)\delta_{ij}
\Big),
\end{equation}
for some function $u$ and vector field $Y$ satisfying
\begin{equation}
u(x)=1+a|x|^{2-n}+O_{2,\alpha}(|x|^{1-n}),
\qquad
Y_i(x)=b_i|x|^{2-n}+O_{2,\alpha}(|x|^{1-n}),
\end{equation}
where $a,b_1,\dots,b_n$ are constants.
The next result is due to Eichmair-Huang-Lee-Schoen \cite{EichmairHuangLeeSchoen}.
We include some details here for completeness.

\begin{theorem}\label{thm:reduction}
Suppose that there exists an asymptotically flat initial data set $(M^n,g,k)$
satisfying the dominant energy condition with
\[
E_{\mathrm{ADM}}<|P_{\mathrm{ADM}}|.
\]
Then there exists another asymptotically flat initial data set, which by abuse
of notation we still denote by $(M^n,g,k)$, with harmonic asymptotics, a strict dominant energy condition $\mu>|J|_g$,
and such that its energy satisfies
\[
E_{\mathrm{ADM}}<0.
\]
Moreover, there exists $\lambda>0$ such that
\begin{equation*}
    \mu-|J|_{g}\geq\lambda r^{-n-1}\quad\text{ on }M^n_{\mathrm{end}}.
\end{equation*} 
\end{theorem}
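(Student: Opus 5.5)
We combine the Eichmair--Huang--Lee--Schoen density theorem \cite{EichmairHuangLeeSchoen} with the boost theorem of Christodoulou--\'O Murchadha \cite{ChristodoulouOMurchadha}. The order is: first perturb to harmonic asymptotics with strict dominant energy condition, then boost so that the energy becomes negative, then perturb again to restore harmonic asymptotics and strict DEC with the quantitative lower bound.

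\textbf{Step 1: first density perturbation.} Start from $(M^n,g,k)$ with $E_{\mathrm{ADM}}<|P_{\mathrm{ADM}}|$ and set $\delta:=|P_{\mathrm{ADM}}|-E_{\mathrm{ADM}}>0$. By the density theorem of \cite{EichmairHuangLeeSchoen}, for any $\varepsilon>0$ there is an asymptotically flat initial data set $(\bar g,\bar k)$ with harmonic asymptotics and strict dominant energy condition $\bar\mu>|\bar J|_{\bar g}$. Its ADM energy--momentum is $\varepsilon$-close to $(E_{\mathrm{ADM}},P_{\mathrm{ADM}})$. Choosing $\varepsilon<\delta/2$ keeps $\bar E<|\bar P|$, so $(\bar E,\bar P)$ is spacelike.

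\textbf{Step 2: boost.} The data from Step~1 are vacuum near infinity up to controlled decay; more precisely, one uses the version of density in \cite{EichmairHuangLeeSchoen} giving data that are exactly vacuum (or have compactly supported matter) outside a compact set. The Christodoulou--\'O Murchadha theorem then yields a maximal development of the asymptotic region containing boosted slices. Along such a slice the ADM vector transforms as a Lorentz vector, up to an error that can be made small. Since $(\bar E,\bar P)$ is spacelike, there is a unit future timelike $a$ with $a\cdot(\bar E,\bar P)<0$, exactly as in the proof of Corollary~\ref{cor:AH-reduction}. Boosting in direction $a$ produces an asymptotic end with negative energy.

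This step needs a complete initial data set, not just an asymptotic end. To get one, glue the boosted slice of the exterior region back to the interior of the original data inside the development. The dominant energy condition holds on the new slice because the development satisfies the spacetime DEC, by local well-posedness in the interior plus the vacuum exterior.

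\textbf{Step 3: second density perturbation.} Apply the density theorem once more to the boosted data. This restores harmonic asymptotics and strict DEC while keeping $E_{\mathrm{ADM}}<0$, again by choosing the perturbation small relative to $|E_{\mathrm{ADM}}|$.

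\textbf{Step 4: quantitative lower bound.} To obtain $\mu-|J|_g\ge\lambda r^{-n-1}$, mimic the proof of Theorem~\ref{DS density}:
\begin{enumerate}
\item Choose a bounded positive weight $\mathfrak w$ with $\mathfrak w=r^{-n-1}$ near infinity.
\item Solve the linearized constraint system for a conformal-type deformation $(1+tv)^{\frac{4}{n-2}}$ together with a vector-field correction. This raises $\mu$ by roughly $\tfrac t3\mathfrak w$ and changes $|J|$ by at most $\tfrac t4\mathfrak w$.
\end{enumerate}
Here $v$ and the vector field decay like $r^{2-n}$, so harmonic asymptotics are preserved after adjusting $a,b_i$, and the energy changes by $O(t)$. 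Taking $t$ small keeps $E_{\mathrm{ADM}}<0$ and gives the bound with $\lambda\approx t/12$.

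\textbf{Main obstacle.} The hardest step is Step~2: justifying that the boosted slice is a complete asymptotically flat initial data set satisfying DEC whose energy transforms Lorentz-covariantly. I would rely directly on \cite{ChristodoulouOMurchadha} together with the formulation used in \cite[Section~8]{EichmairHuangLeeSchoen}, which carries this out in all dimensions, rather than reproving it.
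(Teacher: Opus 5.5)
Your plan is essentially the paper's proof: a density step from \cite{EichmairHuangLeeSchoen} making the data vacuum outside a compact set, the Christodoulou--\'O Murchadha boost to force $E_{\mathrm{ADM}}<0$, and a second density step (adding the bump $\lambda\eta(r)r^{-n-1}$ to the prescribed $\bar\mu$ via \cite[Lemma 23]{EichmairHuangLeeSchoen}, then passing to harmonic asymptotics via \cite[Proposition 24]{EichmairHuangLeeSchoen}) to obtain the quantitative strict dominant energy condition. Three points need tidying: Step~1 must use only the vacuum-at-infinity version with non-strict DEC, since strict DEC forces $\mu>0$ everywhere and such data can never feed the boost; DEC on the boosted slice holds because the slice agrees with the original data on the core and otherwise lies in the vacuum development of the exterior, not because of any evolution of the interior matter, which is neither available nor needed; and the bump is better built into $\bar\mu$ before passing to harmonic asymptotics, as the paper does, than added afterwards by a Dahl--Sakovich-type deformation, whose vector-field term need not preserve the exact harmonic form.
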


\begin{proof}
Let $(M^n,g,k)$ be an asymptotically flat initial data set 
satisfying $\mu\ge |J|_g$ and $E_{\mathrm{ADM}}<|P_{\mathrm{ADM}}|$.
We first use the density theorem of Eichmair--Huang--Lee--Schoen \cite[page 119]{EichmairHuangLeeSchoen}, together
with the remark following its proof, to perturb the data slightly so that the
dominant energy condition is preserved, the inequality $E_{\mathrm{ADM}}<|P_{\mathrm{ADM}}|$ still holds,
and
\begin{equation}
\mu=0,
\qquad
J=0
\end{equation}
outside a large compact set. 
After this perturbation, we continue to denote the resulting initial data set by
$(M^n,g,k)$.

\medskip

If already $E_{\mathrm{ADM}}<0$, there is nothing further to prove at this stage. Otherwise
we have
\begin{equation}
0\le E_{\mathrm{ADM}}<|P_{\mathrm{ADM}}|.
\end{equation}
Choose coordinates so that $P_{\mathrm{ADM}}$ points in the $x^n$-direction. Since the
energy--momentum vector is spacelike, we may choose a boost parameter
$\theta\in(0,1)$ with $\theta>E_{\mathrm{ADM}}/|P_{\mathrm{ADM}}|$. The Lorentz--transformed energy then
satisfies
\begin{equation}
E_{\mathrm{ADM}}^\theta=\frac{E_{\mathrm{ADM}}-\theta |P_{\mathrm{ADM}}|}{\sqrt{1-\theta^2}}<0.
\end{equation}
By the boost theorem of Christodoulou--\'O Murchadha \cite{ChristodoulouOMurchadha}, the vacuum end of the
spacetime development may be replaced by a boosted asymptotically flat slice
with energy $E_{\mathrm{ADM}}^\theta$. Thus we obtain a new asymptotically flat initial data
set, again denoted by $(M^n,g,k)$, satisfying
\begin{equation}
E_{\mathrm{ADM}}<0
\end{equation}
and still obeying the dominant energy condition.

\medskip

Finally, we apply the density theorem of
Eichmair--Huang--Lee--Schoen once more \cite[Theorem 18]{EichmairHuangLeeSchoen}. By choosing the perturbation sufficiently
small, the inequality $E_{\mathrm{ADM}}<0$ may be preserved, while producing an initial data set
with harmonic asymptotics, and a strict dominant energy condition with the additional control
\begin{equation}
\mu-|J|_g \geq\lambda r^{-n-1}\quad\text{ on }M^n_{\mathrm{end}},
\end{equation}
for some constant $\lambda>0$. This refined inequality can be obtained by choosing $\bar{\mu}=\mu+\lambda\eta(r)r^{-n-1}$ and $\bar{J}=J$ in \cite[Lemma 23]{EichmairHuangLeeSchoen}, where $\lambda$ is chosen sufficiently small and $\eta(r)$ is a cutoff function such that $\eta(r)=0$ on $B_{r_0}$, and $\eta(r)=1$ for $r\ge 2r_0$, with $0\le \eta(r)\le 1$. In partciular, there exists a perturbation of $(g,k)$ with energy--momentum density $(\bar{\mu},\bar{J})$. After applying the perturbation, we obtain a metric with harmonic asymptotics by \cite[Proposition 24]{EichmairHuangLeeSchoen}.
Relabeling this final perturbation by $(M^n,g,k)$ completes the proof.
\end{proof}

\section{Existence and Regularity Theory of Singular Jang Graphs} \label{sec:Jang}

In 1978 P. S. Jang introduced a quasilinear elliptic equation \cite{Jang}, which Schoen and Yau \cite{SchoenYauII} successfully employed in dimension $n=3$ to reduce the positive mass theorem for general initial data to the case of time symmetry. This was later extended to dimensions $3\leq n\leq 7$ by Eichmair \cite{EichmairJang}, who introduced a key advancement in the modern Jang equation approach with the use of geometric measure theory techniques to analyze weak  solutions. Analogs of these results in the asymptotically hyperboloidal setting were obtained by Sakovich \cite{SakovichHyperbolic} for dimension $n=3$, and more recently by Lundberg \cite{LundbergHyperbolic} for dimensions $3\leq n\leq 7$.

\medskip

Consider the regularized Jang equation
\begin{equation}\label{eq:reg-jang}
H(f_{s})-\mathrm{tr}_g(k)(f_{s})=s f_{s} \quad\quad\quad \text{ on }M^n.
\end{equation}
For each $s >0$ a solution $f_{s}\in C^{3,\alpha}_{loc}(M^n)$, $\alpha\in(0,1)$ may be obtained through an exhaustion procedure,
which entails solving an appropriate Dirichlet problem on finite exhausting domains and using barriers in the asymptotic ends to control decay. The resulting solutions \cite[Proposition 5]{EichmairJang}, \cite[Proposition 4.4]{LundbergHyperbolic} satisfy 
\begin{equation}
|f_s| \leq C s^{-1}\quad\quad\quad \text{ on all of } M^n
\end{equation}
via a maximum principle, where $C$ is a constant depending only on the initial data, which in the asymptotically flat setting may be taken to be $C=1+n\sup_{M^n}|k|_{g}$. 
Furthermore, in the asymptotically flat case the regularized solutions satisfy
\begin{equation}\label{b1}
|f_s(x)| \leq c_\beta |x|^{2-\beta} \quad\quad\quad \text{ for all } |x| \geq \Lambda_\beta, 
\end{equation}
where $\beta=1+q\in(2,n)$ when $n\geq 4$, and the constants $c_\beta > 0$ and $\Lambda_\beta \geq 1$ are independent of $\tau$. While in the asymptotically hyperboloidal case \cite[Proposition 4.4]{LundbergHyperbolic}
\begin{equation}\label{b2}
f_-(r) \leq f_{s}(x) \leq f_+(r) \quad\quad\quad \text{ for all } r\geq \Lambda,
\end{equation}
where $\Lambda\geq 1$ and the radial barriers $f_{\pm}$ are independent of $\tau$ with
\begin{equation}
f_{\pm}(r)=\sqrt{1+r^2}+O(r^{3-n})
\end{equation}
when $n\geq 4$.

\begin{theorem}[Higher-dimensional AF geometric Jang limit]\label{thm:higher-dim-jang-limit}
For $n\geq 4$, let $(M^{n},g,k)$ be a complete asymptotically flat initial data set of type $(\ell,\alpha,q,q_0)$.
Then there exists a sequence $s_j\downarrow 0$, a Caccioppoli set
$\mathbf{E}\subset M^n\times\mathbb R$, and a closed set
\[
\bar\Sigma:=\spt(\partial \mathbf{E})\subset M^n\times\mathbb R
\]
such that the following hold.

\begin{enumerate}
\item[\rm (i)]
$\partial \mathbf{E}$ is a $2C$-almost minimizing boundary in $M^n\times\mathbb R$.

\item[\rm (ii)]
If
\[
\Reg(\bar\Sigma):=\bar\Sigma\setminus\Sing(\bar\Sigma),
\]
then $\Reg(\bar\Sigma)$ is a $C^{3,\alpha}_{\loc}$ embedded hypersurface, and
\[
\dim_{\mathcal H}\bigl(\Sing(\bar\Sigma)\bigr)\le n-7.
\]

\item[\rm (iii)]
Every connected component $\Sigma$ of $\Reg(\bar\Sigma)$ is either
\begin{enumerate}
\item[\rm (a)]
a vertical cylinder
\[
\Sigma=\Sigma_{0}^{\Reg}\times\mathbb R,
\]
where $\Sigma_{0}^{\Reg}\subset M^n$ is a smooth embedded marginally trapped hypersurface, 
whose closure is a $2C$-almost minimizing boundary in $M^n$ with singular set of Hausdorff dimension at most $n-8$, or

\item[\rm (b)]
a graph
\[
\Sigma=\graph(f_{\Sigma},U_{\Sigma})
\]
over an open set $U_{\Sigma}\subset M^n$, where
$f_{\Sigma}\in C^{3,\alpha}_{\loc}(U_{\Sigma})$ solves the Jang equation
\[
H(f_{\Sigma})-\tr_g(k)(f_{\Sigma})=0
\qquad\text{on }U_{\Sigma}.
\]
\end{enumerate}

\item[\rm (iv)]
There exists a graphical component
\[
\Sigma_{\infty}=\graph(f_{\infty},U_{\infty})
\]
with induced metric $\bar{g}=g+df_{\infty}^2$
such that, for some $\Lambda_{\beta}\ge 1$ and $c_{\beta}>0$, we have $\{x\in M:|x|>\Lambda_{\beta}\}\subset U_{\infty}$ and
\[
|\nabla^l f_{\infty}(x)|\le c_{\beta}|x|^{2-\beta-l}
\quad\text{for }|x|>\Lambda_{\beta},
\]
with $\beta=1+q\in(2,n)$ and $l=0,\dots,\ell-1$.
In particular, the Jang graph is smooth near infinity, with an asymptotically flat end of type $(\ell-2,\alpha,q,q_0)$ which preserves the ADM energy.

\item[\rm (v)]
For every graphical component $\graph(f_{\Sigma},U_{\Sigma})$, the frontier
$\partial U_{\Sigma}$ is carried by cylindrical components of $\bar\Sigma$.
Along the regular part of the corresponding cross-sections one has
\[
f_{\Sigma}(x)\to \pm\infty,
\]
and vertical translates of the graph converge in the sense of currents to the
corresponding cylinders.
\end{enumerate}
\end{theorem}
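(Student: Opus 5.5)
The plan follows Eichmair's dimension-independent geometric measure theory construction of geometric Jang limits, with the smooth regularity used in \cite{EichmairJang} for $n\le 7$ replaced by the partial regularity of almost minimizing boundaries in all dimensions.

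\textbf{Step 1: almost minimizing property of the regularized graphs.} Take the regularized solutions $f_s$ of \eqref{eq:reg-jang}. Let $\mathbf{E}_s:=\{(x,t): t<f_s(x)\}\subset M^n\times\mathbb R$ be their subgraphs. Write the regularized Jang equation as a prescribed mean curvature equation for $\partial\mathbf{E}_s$ in $(M^n\times\mathbb R, g+dt^2)$. The right-hand side is the extension of $\tr_g k$ to the graph plus the term $s f_s$. Since $|s f_s|\le C$ by the maximum principle bound and $|k|$ is bounded, the mean curvature of $\partial \mathbf{E}_s$ is bounded by $2C$ after adjusting constants.

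To convert this into almost minimality, use the calibration argument of \cite{EichmairJang}. The downward unit normal of the graph, extended vertically, is a unit vector field $X$ whose divergence equals $-H$. For any competitor $\mathbf{F}$ with $\mathbf{F}\Delta\mathbf{E}_s\Subset B_\rho(p)$, apply the divergence theorem to $X$ over $\mathbf{F}\Delta\mathbf{E}_s$. This yields
\[
\mathcal H^n(\partial^*\mathbf{E}_s\cap B_\rho(p))\le \mathcal H^n(\partial^*\mathbf{F}\cap B_\rho(p))+2C\,|B_\rho(p)|.
\]
Hence each $\partial\mathbf{E}_s$ is $2C$-almost minimizing, uniformly in $s$.

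\textbf{Step 2: compactness and the limit.} Uniform local perimeter bounds follow from almost minimality by comparing with balls. BV compactness then gives a subsequence $s_j\downarrow0$ with $\mathbf{E}_{s_j}\to\mathbf{E}$ in $L^1_{\loc}$. Almost minimality passes to the limit, and the supports converge in the Hausdorff sense by the usual density lower bounds. This proves (i).

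For (ii), regularity theory for almost minimizing boundaries (Tamanini, Federer dimension reduction) gives $C^{1,\alpha}$ regularity off a closed set of Hausdorff dimension at most $(n+1)-8=n-7$. Elliptic bootstrapping of the prescribed mean curvature equation on the regular part upgrades this to $C^{3,\alpha}_{\loc}$.

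\textbf{Step 3: structure of components, (iii) and (v).} Use vertical translation invariance of the problem together with the fact that the Jacobi field $\langle\nu,\partial_t\rangle$ has a sign on each $\operatorname{graph} f_{s}$. In the limit this yields on each regular component $\Sigma$ a nonnegative Jacobi-type function satisfying a stability-type equation. Apply the Harnack inequality on the connected regular part; this is valid because a set of codimension at least $7$ does not disconnect it and has zero capacity. The function is then either identically zero or positive.

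If it is zero, $\partial_t$ is tangent, so $\Sigma$ is a vertical cylinder $\Sigma_0\times\mathbb R$. Its cross-section is almost minimizing in $M^n$ with singular set of dimension at most $n-8$, and the Jang equation reduces to $H\pm\tr_{\Sigma_0}k=0$, i.e.\ marginal trapping. If it is positive, $\Sigma$ is a graph over an open set $U_\Sigma$ and satisfies the Jang equation, since $sf_s\to0$ locally where $f_s$ stays bounded.

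For (v), the frontier behavior follows from local graph bounds. Blow-up of $f_\Sigma$ near $\partial U_\Sigma$ together with translation compactness shows that vertical translates converge to cylinders over the frontier.

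\textbf{Step 4: asymptotics, (iv).} The barrier bound \eqref{b1} holds uniformly in $s$. Near infinity the data are small perturbations of flat data, so interior gradient and Schauder estimates for the quasilinear Jang equation in the weighted setting give the stated decay of $\nabla^l f_\infty$. In particular, far out $\bar\Sigma$ is a single graph. The ADM energy is preserved because $df_\infty\otimes df_\infty=O(|x|^{2-2\beta})$ decays fast enough not to contribute to the flux, given $\beta>2$ and $q>\tfrac{n-2}{2}$.

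\textbf{Expected main obstacle.} The hardest step is Step 3 in the presence of singularities. The Harnack/dichotomy argument and the convergence of translates require handling a singular set of dimension up to $n-7$. I would rely on its vanishing capacity and on the $L^1$ convergence of currents, in place of the smooth arguments of \cite{EichmairJang}.
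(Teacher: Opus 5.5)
Your proposal is correct and follows essentially the same route as the paper, which likewise obtains (i)--(ii) from Eichmair's calibration-type almost-minimizing property of the regularized graphs together with compactness and partial regularity of almost minimizing boundaries, derives the cylinder/graph dichotomy in (iii) from the Harnack principle applied in the limit to $v_s^{-1}=|\langle\nu_s,\partial_t\rangle|$, and gets (iv)--(v) from Eichmair's interior gradient estimate combined with the barrier \eqref{b1} and the standard vertical-translation/blow-up argument. (Your remark about the singular set having zero capacity and not disconnecting is unnecessary: (iii) is stated for each connected component of $\Reg(\bar\Sigma)$, so the strong maximum principle applies on that component directly.)
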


\begin{theorem}[Higher-dimensional AH geometric Jang limit]
\label{thm:higher-dim-jang-limit-lundberg}
For $n\geq 4$, let \((M^{n},g,k)\) be a complete 
asymptotically hyperboloidal initial data set of type $(\ell,\alpha,n,\tau_0)$ with Wang asymptotics.
Then the conclusions
\({\rm (i)}\), \({\rm (ii)}\), \({\rm (iii)}\), and \({\rm (v)}\) of
Theorem~\ref{thm:higher-dim-jang-limit} hold unchanged. Moreover, the conclusion
\({\rm (iv)}\) is replaced by the following statement.

\begin{enumerate}
\item[\rm (iv)]
There exists a graphical component
\[
        \Sigma_{\infty}=\graph(f_{\infty},U_{\infty})
\]
which contains the chosen asymptotic end. Moreover, for any $\epsilon>0$ there exists \(\Lambda\geq 1\)
such that $\{r>\Lambda\}\subset U_{\infty}$ and
\[
        f_{\infty}(r,\theta)
        =
        \sqrt{1+r^2}
        +
        \frac{\boldsymbol{\alpha}(\theta)}{r^{n-3}}
        +
        O_5(r^{2-n+\epsilon}),
\]
where $\boldsymbol{\alpha}\in C^{2}(S^{n-1})$ is the unique solution of
\[
        \Delta_{g_{S^{n-1}}}\boldsymbol{\alpha}-(n-3)\boldsymbol{\alpha}
        =
        \frac{n-2}{2}\operatorname{tr}_{g_{S^{n-1}}}\mathbf m
        +
        \operatorname{tr}_{g_{S^{n-1}}}\mathbf p.
\]
In particular, the Jang graph is smooth near infinity and its end is asymptotically flat of type $(\ell-2,\alpha, n-2,0)$,
without the scalar curvature integrability condition.
Moreover, the ADM energy of the Jang graph is related to that of the initial data by $\bar{E}_{ADM}=(n-1)E$.
\end{enumerate}
\end{theorem}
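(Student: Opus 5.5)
The plan is to transplant Lundberg's construction, with the geometric measure theory done as in Eichmair, and to check that in the argument giving Theorem~\ref{thm:higher-dim-jang-limit} only the asymptotic barrier step is dimension- or end-specific. Conclusions (i), (ii), (iii), (v) come from a local analysis that does not see the asymptotics. (iv) is the asymptotic statement, and it will be handled separately.

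\textbf{Step 1: solutions and compactness.} I take the solutions $f_s$ of \eqref{eq:reg-jang} from \cite[Proposition 4.4]{LundbergHyperbolic}. They satisfy $|f_s|\le Cs^{-1}$ and the radial barriers \eqref{b2}. Their graphs $\Sigma_s\subset M^n\times\mathbb R$ have mean curvature $H=\tr_g k(f_s)+sf_s$, and this is bounded by a constant $2C$ independent of $s$. Hence the subgraphs $\mathbf E_s$ are $2C$-almost minimizing boundaries, exactly as in \cite{EichmairJang}. Local mass bounds and BV compactness give a subsequential limit $\mathbf E$, which is again $2C$-almost minimizing; this is (i). The almost-minimizing regularity theory (Federer dimension reduction, Allard, Schauder) gives (ii) with $\dim_{\mathcal H}\Sing\le n-7$. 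Elliptic regularity for the prescribed-mean-curvature equation then upgrades the regular part to $C^{3,\alpha}_{\loc}$.

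\textbf{Step 2: structure of components, (iii) and (v).} On the regular part, the vertical component $\langle\nu,\partial_t\rangle$ of the unit normal is a Jacobi-type function that does not change sign. This follows from the translation invariance of the limit equation, because the term $sf_s$ vanishes in the limit. By Harnack, on each connected component it is either identically zero or strictly positive. If it is zero the component is a cylinder $\Sigma_0\times\mathbb R$ with $H_{\Sigma_0}\pm\tr_{\Sigma_0}k=0$. The cross-section is then almost minimizing in $M^n$, so its singular set has dimension at most $n-8$. If it is positive the component is a graph solving Jang's equation. For (v) I use Eichmair's blow-up argument. Suppose $f_\Sigma$ stays bounded near a boundary point of $U_\Sigma$; then the closure would be a graph up to that point, which contradicts its being a boundary point. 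Vertical translates converge by compactness to a translation-invariant limit, which must be a cylinder.

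\textbf{Step 3: the asymptotic end, (iv), which is the main obstacle.} The barriers in \eqref{b2} are independent of $s$, so they pass to the limit. This gives $\{r>\Lambda\}\subset U_\infty$ and $f_\infty=\sqrt{1+r^2}+O(r^{3-n})$. To obtain the stated expansion I would linearize Jang's equation about the hyperboloidal solution $\sqrt{1+r^2}$, whose graph is flat $\mathbb R^n$ inside Minkowski-like data. With Wang asymptotics the source term is of order $r^{1-n}$ times $\frac{n-2}{2}\tr\mathbf m+\tr\mathbf p$. Separating the radial variable in the linearized operator gives the ODE whose angular part is $\Delta_{S^{n-1}}-(n-3)$. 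The leading correction is therefore $\boldsymbol\alpha(\theta)r^{3-n}$. The remainder $O_5(r^{2-n+\epsilon})$ comes from weighted Schauder estimates applied iteratively, with the loss of $\epsilon$ arising at the indicial root.

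The Jang metric $\bar g=g+df_\infty^2$ is then asymptotically Euclidean with decay $r^{2-n}$. Computing its ADM flux, I expect the contributions of $\mathbf m$ and of $\boldsymbol\alpha$ (after integration by parts on the sphere) to combine into $\bar E_{ADM}=(n-1)E$, following \cite{LundbergHyperbolic}, where this is done for $n\le7$ but without any dimension dependence. The delicate part is the decay bookkeeping. In particular, integrability of the scalar curvature is lost, and I need to confirm that Lundberg's barrier construction uses no low-dimensional regularity.
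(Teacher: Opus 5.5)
Your proposal follows essentially the same route as the paper, which likewise combines Eichmair's almost-minimizing compactness, regularity and Harnack cylinder/graph dichotomy with Lundberg's barriers, asymptotic expansion and energy identity $\bar E_{ADM}=(n-1)E$; your Jacobi-field argument for $\langle\nu,\partial_t\rangle$ is the same mechanism as the paper's inequality for $v_s^{-1}$. The one ingredient you should state explicitly in Step 3 is the interior gradient estimate \cite[Lemma 2.1]{EichmairPlateau}, \cite[Lemma 6.1]{LundbergHyperbolic}: the $s$-independent barriers \eqref{b2} only confine $\bar\Sigma$ to a slab over $\{r>\Lambda\}$, whereas the gradient estimate gives smooth graphical convergence there, hence $\{r>\Lambda\}\subset U_\infty$, together with the $C^1$ control that your weighted Schauder bootstrap needs.
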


\begin{remark}
We note that, strictly speaking, use of the phrase `asymptotically flat' for the Jang graph in (iv) of Theorem \ref{thm:higher-dim-jang-limit-lundberg} is a slight abuse of terminology based on Definition \ref{Def:AF}, as the ranges of parameters and the integrability of scalar curvature are not enforced. Furthermore, since the Jang scalar curvature is not necessarily integrable in this situation, it is necessary to specify the asymptotically flat chart that is being used when discussing the ADM energy in this case, namely that employed in \cite[Proposition C.1]{LundbergHyperbolic}.
\end{remark}

\begin{proof}[Proof of Theorems \ref{thm:higher-dim-jang-limit} and \ref{thm:higher-dim-jang-limit-lundberg}]
These theorems are a combination of existing results in the literature. Here we will summarize the main points 
and provide the appropriate references. For each $s>0$, \cite[Proposition~6]{EichmairJang} shows that
$\Gamma_{s}:=\graph(f_{s})\subset M^n\times\mathbb R$ is a $2C$-almost minimizing boundary.
Choose a sequence $s_j\downarrow 0$. Since the $\Gamma_{s_j}$ are
uniformly $2C$-almost minimizing, the compactness theorem for
$2C$-almost minimizing boundaries yields, after passing to a subsequence,
a Caccioppoli set $\mathbf{E}\subset M^n\times\mathbb R$ such that
$\Gamma_{s_j}\to \partial \mathbf{E}$ as currents and as varifolds. It follows that
$\bar\Sigma:=\spt(\partial \mathbf E)$ is an $n$-dimensional almost minimizing hypersurface in the
ambient $(n+1)$-manifold $M^n\times\mathbb R$. Therefore, the regularity theorem
for almost minimizing boundaries applies to yield
$\dim_{\mathcal H}\bigl(\Sing(\bar\Sigma)\bigr)\le n-7$.
On $\Reg(\bar\Sigma)$, Allard regularity yields local $C^{1,\alpha}$ convergence
of a subsequence of the $\Gamma_{s_j}$, and the equation
\eqref{eq:reg-jang} upgrades this to local $C^{3,\alpha}$ convergence by
standard quasilinear elliptic estimates. This proves~(ii).

\medskip

Let $\Sigma$ be a connected component of $\Reg(\bar\Sigma)$. Since the
approximating graphs satisfy
\begin{equation}
H(f_{s_j})-\tr_g(k)(f_{s_j})=s_j f_{s_j},
\end{equation}
and since the convergence on compact subsets of $\Sigma$ is smooth while
$s_j f_{s_j}\to 0$ locally along the convergent sheets, one obtains
\begin{equation}
H_{\Sigma}-\tr_{\Sigma}(k)=0.
\end{equation} 
Let $v_{s}=\sqrt{1+|\nabla f_{s}|_g^2}$, and note that this quantity \cite[Lemma A.1]{EichmairMetzger} (see also \cite[Lemma 2.3]{EichmairPlateau}) satisfies
\begin{equation}
\Delta_{\Sigma_{s}}v_{s}^{-1}=-(|h_{s}|^2 +\nu_{s}(H_{s})+\mathrm{Ric}(\nu_{s},\nu_{s}))v_{s}^{-1}\leq \sigma v_{s}^{-1} +\langle Z,\nabla_{\Sigma_{s}} v_{s}^{-1}\rangle,
\end{equation}
for some locally bounded function $\sigma$ and vector field $Z$ whose bounds depend only on the initial data, where $\mathrm{Ric}$ is the ambient Ricci curvature of $M^n \times \mathbb{R}$, $h_{s}$ is the second fundamental form of the graph, and $\nu_{s}=v_{s}^{-1}(f_{s}^{i}\partial_i -\partial_t)$ is the unit normal to the graphs. 
In particular, the Harnack principle for limits of graphs implies that every
connected component of the regular set of such a geometric limit is either a
vertical cylinder or a graph over an open subset of $M^n$. 
Hence each connected
component of $\Reg(\bar\Sigma)$ is one of the two types in~(iii), and in the
graphical case the defining function solves the unregularized Jang equation. See Eichmair \cite[Section 3]{EichmairPlateau}
for more details.

\medskip

Statement (iv) in both the asymptotically flat and asymptotically hyperboloidal cases follows from 
the interior gradient estimate \cite[Lemma 2.1]{EichmairPlateau} (\cite[Lemma 6.1]{LundbergHyperbolic})
together with a standard boot-strap, and the barrier bounds \eqref{b1} and \eqref{b2}. Moreover, the relation
between the ADM energy of the Jang graph and original energy of the asymptotically hyperboloidal
initial data is given by \cite[(A.3) and (C.1)]{LundbergHyperbolic}, namely $E_{ADM}=(n-1)E$.

\medskip

Lastly, the boundary behavior in~(v) is the standard blow-up mechanism for
geometric limits of regularized Jang graphs: near the frontier of a graphical
domain, vertical translates of the graph subconverge to cylindrical components,
and along the regular part of the corresponding cross-sections the defining
function tends uniformly to $+\infty$ or $-\infty$. This is exactly the same
cylinder-versus-graph alternative and translation argument used in the smooth
case, and it is local on the regular set, so it carries over unchanged here.
\end{proof}

A particularly advantageous feature of Jang graphs is the weak positivity property enjoyed by
their scalar curvature when the dominant energy condition is satisfied. This is recorded in
the next result \cite[(2.25)]{SchoenYauII}.

\begin{proposition}[Schoen--Yau scalar curvature identity]\label{SY scalar curvature id}
Let $\Sigma\subset\mathrm{Reg}(\bar{\Sigma})$ be a graphical connected component with
induced Jang metric $\bar{g}=g+df^2$. Then the scalar curvature of the Jang metric
takes the form
\[
        R_{\bar{g}}
        =
        2(\mu-J(\mathrm{w}))
        +
        |h-k|_{\bar g}^2
        +
        2|X|_{\bar g}^2
        -
        2\operatorname{div}_{\bar g} X ,
\]
where $h$ is the second fundamental form of $\Sigma$ and $\mathrm{w}$, $X$ are 1-forms given by
\[
\mathrm{w}_i=\frac{f_i}{\sqrt{1+|\nabla f|_g^2}},\quad\quad\quad h_{ij}=\frac{\nabla_{ij} f}{\sqrt{1+|\nabla f|_g^2}},\quad\quad\quad
X_i =\bar g^{jl}(h_{ij}-k_{ij})\mathrm{w}_l, 
\]
with $\nabla_{ij}$ denoting covariant differentiation with respect to $g$.
\end{proposition}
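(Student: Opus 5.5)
The identity is a pointwise computation on the regular graphical part, so I will verify it by direct computation in the Schoen--Yau manner, exactly as in \cite[(2.25)]{SchoenYauII}. Since $\Sigma$ is a $C^{3,\alpha}_{\loc}$ graph solving $H_\Sigma-\tr_\Sigma(k)=0$ by Theorem~\ref{thm:higher-dim-jang-limit}(iii)(b), every quantity is classical and no singular-set issues arise.

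I will extend $k$ trivially to $M^n\times\mathbb R$, setting $k(\partial_t,\cdot)=0$, so it is $t$-independent. Then I will apply the Gauss equation for $\Sigma\subset (M^n\times\mathbb R, g+dt^2)$ with unit normal $\nu=v^{-1}(\nabla f-\partial_t)$, $v=\sqrt{1+|\nabla f|_g^2}$. This gives
\[
R_{\bar g}=R_g-2\,\mathrm{Ric}(\nu,\nu)+H^2-|h|^2_{\bar g}
\]
together with the product structure, where the ambient Ricci curvature is $\mathrm{Ric}_g(\mathrm w,\mathrm w)$ in terms of the horizontal part of the normal. I will then insert the definition $2\mu=R_g+(\tr_g k)^2-|k|_g^2$ and the Jang equation $H=\tr_{\bar g}k$. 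Here the identification of $\tr_\Sigma k$ with $\bar g^{ij}k_{ij}$ uses the pullback of $k$ via the graph map. Completing the square turns $H^2-|h|^2$ into $|h-k|^2_{\bar g}$ plus terms involving $(\tr k)^2$, $|k|^2$ and the normal components $k(\nu,\cdot)$.

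The remaining step is the Codazzi equation. It expresses $\mathrm{Ric}(\nu,\nu)$ plus lower-order terms through $\Div_\Sigma$ of $h(\nu^\top$-type$)$ vectors. Concretely, I will take the divergence of the 1-form $X_i=\bar g^{jl}(h_{ij}-k_{ij})\mathrm w_l$. I will compute $\Div_{\bar g}X$ using three ingredients:
\begin{itemize}
\item Codazzi, $\nabla^\Sigma_j h_{il}-\nabla^\Sigma_i h_{jl}=\mathrm{Rm}(\nu,\cdot,\cdot,\cdot)$;
\item the identity $\nabla^\Sigma_i \mathrm w_j = h_{ij}\,v^{-1}$, since $\mathrm w$ is the gradient of $t$ restricted to $\Sigma$ up to normalization, i.e.\ $\nabla^\Sigma t=\mathrm w/v$-type relations;
\item the Jang equation differentiated tangentially.
\end{itemize}
The momentum term will appear as $J(\mathrm w)$ through $\Div_g(k-\tr_g k\,g)$ paired with $\mathrm w$. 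The quadratic terms in $X$ will arise from $\langle h-k, \mathrm w\otimes\mathrm w\rangle$-type contractions, using $|\mathrm w|^2_{\bar g}=|\nabla f|^2/v^2\cdot(\ldots)$.

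I expect the main obstacle to be the bookkeeping in this divergence computation. In particular, matching the Gauss curvature terms $R_g-2\mathrm{Ric}(\nu,\nu)$ with $\Div_{\bar g}X$ requires care with the distinction between $g$ and $\bar g$ and between ambient and induced connections. I will organize it by working in the ambient frame, where Gauss and Codazzi are clean, and translating only at the end. As a sanity check, the identity must reduce to the Riemannian formula $R_{\bar g}=2\mu$-type when $f$ is constant and $k=0$, and it must match the standard form in \cite{SchoenYauII,EichmairJang}.
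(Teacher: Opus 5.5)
Your route (Gauss equation for $\Sigma\subset M^n\times\mathbb R$, Codazzi to eliminate $\mathrm{Ric}(\nu,\nu)$, Jang's equation, completing squares) is the Schoen--Yau computation. That is all the paper relies on: it gives no derivation and simply cites \cite[(2.25)]{SchoenYauII}. As written, however, your bookkeeping would not close, for two reasons.

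\textbf{The normalization of $X$.} Set $f^j:=g^{jl}f_l$ and $v:=\sqrt{1+|\nabla f|_g^2}$. Then $\bar g^{jl}=g^{jl}-f^jf^l/v^2$, hence $\bar g^{jl}f_l=f^j/v^2$. So the field $\bar g^{jl}(h_{ij}-k_{ij})\mathrm w_l$ you propose to differentiate is $v^{-2}$ times the field the computation actually produces, namely $X_i=g^{jl}(h_{ij}-k_{ij})\mathrm w_l=(h_{ij}-k_{ij})f^j/v$, which is Schoen--Yau's $q_i$. The identity is not invariant under multiplying $X$ by a nonconstant function. To see this, take $k=0$ on flat $\mathbb R^n$: Jang graphs are then minimal graphs, $R_{\bar g}=-|h|^2_{\bar g}$, and the identity reduces to $\operatorname{div}_{\bar g}X=|h|^2_{\bar g}+|X|^2_{\bar g}$. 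This holds for $X=d\log v$ (the classical Jacobi identity for $v^{-1}$), but fails for $v^{-2}d\log v$ wherever $h\neq0$ and $\nabla f\neq0$. So the index in $X$ must be raised with $g$, not $\bar g$. Your sanity check with $f$ constant is too weak to detect this; the minimal-graph check does.

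\textbf{The differential identity and the source of $2|X|^2_{\bar g}$.} Your identity $\nabla^{\bar g}_i\mathrm w_j=h_{ij}/v$ is wrong. What holds is $\nabla^{\bar g}_i\nabla^{\bar g}_jf=h_{ij}/v$, the Hessian of the height function $t|_\Sigma=f$. Differentiating the normalization gives the separate relation $\partial_iv=h_{ij}f^j$, i.e.\ $\partial_i\log v=h_{ij}\mathrm w^j$ with $\mathrm w^j:=g^{jl}\mathrm w_l$, so the correct field is $X_i=\partial_i\log v-k_{ij}\mathrm w^j$. Since $\partial_t$ is parallel, contracted Codazzi gives
\[
-2\,\mathrm{Ric}(\nu,\nu)=2|h|^2_{\bar g}+2|d\log v|^2_{\bar g}-2\Delta_{\bar g}\log v+2\,\mathrm w^j\partial_jH.
\]
Equivalently, this is the Jacobi identity for $v^{-1}$ quoted in the proof of Theorem~\ref{thm:higher-dim-jang-limit}, where $\nu(H)=\mathrm w^j\partial_jH$.

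This display also shows why the order of your steps fails. The $H^2-|h|^2_{\bar g}$ from Gauss cannot be completed to $+|h-k|^2_{\bar g}$. Only after adding the $2|h|^2_{\bar g}$ above can $H^2+|h|^2_{\bar g}$ be matched against $2\mu$, using $\tr_gk=H+k(\nu,\nu)$ and the splitting of $|k|_g^2$ into tangential, mixed and normal parts.

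The term $2|X|^2_{\bar g}$ is then assembled from three pieces:
\begin{enumerate}
\item the $2|d\log v|^2_{\bar g}$ in the display above;
\item the mixed part $2|k_{ij}\mathrm w^j|^2_{\bar g}$ of $|k|^2_g$;
\item the cross term $-4\,\bar g^{il}(\partial_i\log v)\,k_{lj}\mathrm w^j$, which arises when $H=\tr_gk-k(\nu,\nu)$ is differentiated along $\mathrm w$, because the derivative of $\nu$ is governed by $h$.
\end{enumerate}
The remaining first-order terms in $k$ combine into $-2J(\mathrm w)$. In particular, $2|X|^2_{\bar g}$ does not come from contracting $h-k$ against $\mathrm w\otimes\mathrm w$.
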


\begin{proof}[Proof of Theorem \ref{thm:Jang}]
As discussed in the proof of Theorems \ref{thm:higher-dim-jang-limit} and \ref{thm:higher-dim-jang-limit-lundberg},
a limiting geometric Jang surface $\bar{\Sigma}$ is known to be the support of a $C$-almost minimizing boundary
$\partial\mathbf{E}$ in $M^n\times\mathbb{R}$, see Eichmair \cite[Lemma A.2]{EichmairPlateau}.
Thus, it remains only to justify the Minkowski dimension estimate for the singular
set. Our argument is purely local, and does not distinguish between the AH and the AF cases.

\medskip

Let us recall the precise statement of the $C$-almost minimizing property.
If $W \Subset M^n\times \mathbb R$ is open and $\mathbf X$ is an integral
$(n+1)$-current supported in $W$, then
\begin{equation}
\mathbf M_W(\partial \mathbf E)
\le \mathbf M_W(\partial \mathbf E+\partial \mathbf X) + C\,\mathbf M_W(\mathbf X).
\end{equation}
In the context of the Jang surface, the constant $C$ depends only on the local mean curvature bound coming
from the regularized Jang equation.

\medskip

Fix now a relatively compact coordinate ball
\begin{equation}
U \subset M^n\times \mathbb R,
\end{equation}
and identify it with a bounded open subset of $\mathbb R^{n+1}$ by a smooth chart.
Since Minkowski dimension is invariant under bi-Lipschitz changes of coordinates,
it is enough to work in this Euclidean chart.

\medskip

Let $E \subset U$ be a set of finite perimeter whose boundary has support $\bar{\Sigma}\cap U$.
We claim that $E$ satisfies the almost minimizing hypothesis of
\cite[(6.8)--(6.9)]{FocardiMarcheseSpadaro}. To see this, suppose $F$ is another set of finite perimter with
\begin{equation}
E\Delta F \Subset B_r(x)\subset U.
\end{equation}
Choose the filling current $\mathbf X:=\llbracket F\rrbracket-\llbracket E\rrbracket$.
Then $\partial \mathbf X=\partial \llbracket F\rrbracket-\partial \llbracket E\rrbracket$ in $B_r(x)$, and
\begin{equation}
\mathbf M(\mathbf X)=|E\Delta F|.
\end{equation}
Applying the $C$-almost minimizing property gives
\begin{equation}
\Per(E;B_r(x))
\le \Per(F;B_r(x)) + C\,|E\Delta F|.
\end{equation}
Since $E\Delta F\Subset B_r(x)\subset \mathbb R^{n+1}$ we have
\begin{equation}
|E\Delta F| \le \beta_{n+1} r^{n+1},
\end{equation}
and therefore
\begin{equation}
\Per(E;B_r(x))
\le \Per(F;B_r(x)) + C\beta_{n+1} r^{n+1},
\end{equation}
where $\beta_{n+1}$ is the Euclidean volume of the unit $(n+1)$-ball.
Thus $E$ is a perimeter almost minimizer in the sense of
\cite{FocardiMarcheseSpadaro}, with
\begin{equation}
\alpha(r):=C\beta_{n+1}r,
\end{equation}
so that
\begin{equation}
\Per(E;B_r(x)) \le \Per(F;B_r(x)) + \alpha(r)\,r^n.
\end{equation}
Moreover $\alpha(r)$ is nondecreasing, $\alpha(r)\to 0$ as $r\downarrow 0$, and
$\alpha(r)/r$ is constant (hence nonincreasing), together with
\begin{equation}
\int_0^T \frac{\alpha(t)^{1/2}}{t}\,dt
= \sqrt{C\beta_{n+1}}\int_0^T t^{-1/2}\,dt
<\infty.
\end{equation}
It follows that all assumptions of \cite[Theorem 6.7]{FocardiMarcheseSpadaro} are satisfied.

\medskip

Applying \cite[Theorem 6.7]{FocardiMarcheseSpadaro} in the chart, with ambient
dimension $n+1$, yields the Minkowski dimension estimate
\begin{equation}
\dim_{\mathcal M} (\Sing(\bar{\Sigma}\cap U) )\le (n+1)-8 = n-7.
\end{equation}
Since $U\subset M^n\times \mathbb R$ is arbitrary, the same estimate holds locally
on all of $\bar{\Sigma}$, and this proves
\begin{equation}
\dim_{\mathcal M} ( \Sing(\bar{\Sigma}))\le n-7.
\end{equation}
\end{proof}

\section{Desingularized Jang Graph and Proof of Main Theorems} \label{sec:singularities}

\begin{lemma}\label{lem:jang-stability}
Let $(M^n,g,k)$, $n\geq 4$ be either asymptotically flat with harmonic asymptotics, or asymptotically hyperboloidal with Wang asymptotics, 
satisfying a strict dominant energy condition with $\mu-|J|_g\geq \lambda r^{-n-1}$ in the asymptotic end for some constant $\lambda>0$. Consider a regular Jang graphical component
\begin{equation*}
\Sigma_\infty=\graph(f_\infty,U_\infty)\subset \Reg(\bar\Sigma)
\end{equation*}
obtained from Theorem \ref{thm:higher-dim-jang-limit} or
\ref{thm:higher-dim-jang-limit-lundberg}, and let $\bar{g}$ be its induced metric.
Then there exist smooth positive functions $\rho$ and $Q$ on $\Sigma_{\infty}$ and $c, C_l\in\mathbb{R}$ such that
\begin{equation}\label{Qrho}
|\nabla^l \left(\rho -(1+cr^{2-n})\right)|_{\delta}\leq C_l r^{1-n-l},\quad |\nabla^l Q|_{\delta}\leq C_l r^{-n-1 -l},
\quad Q\geq \frac{\lambda}{2}r^{-n-1},
\end{equation}
for large $r$. Moreover, the following inequality holds for all $\phi\in C^{\infty}(\Sigma_{\infty})$ with the property that $\phi$ vanishes
in a neighborhood of the singular set, is constant in the asymptotically flat end, and has bounded supported outside this same end: 
\begin{align}\label{bw}
\begin{split}
\lim_{r\rightarrow\infty}&\int_{\Sigma^r_\infty}\left(\rho|\nabla \phi|_{\bar{g}}^2
+\frac12  \rho
\left(
R_{\bar{g}}
-2\Delta_{\bar{g}}\log \rho
-\frac{n+1}{n+2} |\nabla \log \rho|_{\bar{g}}^2
\right) \phi^2 \right)dV_{\bar{g}}\\
&\geq
\int_{\Sigma_\infty} \rho \,Q \,\phi^2 dV_{\bar{g}},
\end{split}
\end{align}
where $\Sigma_{\infty}^r$ denotes the region contained within coordinate sphere $S_r$.
\end{lemma}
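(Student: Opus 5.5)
The plan is to take $Q$ from the strict dominant energy condition, take $\rho\equiv 1$ on a large compact set and $\rho=1+cr^{2-n}$ near infinity, and reduce \eqref{bw} to the Schoen--Yau identity (Proposition \ref{SY scalar curvature id}) plus integration by parts. The only genuinely non-local issue is the boundary flux of the vector field $X$ through $S_r$.

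\emph{Choice of $Q$.} By assumption $\mu-|J|_g>0$ on $M^n$ and $\mu-|J|_g\ge\lambda r^{-n-1}$ on the end. I choose a smooth positive function $Q_0$ on $M^n$ with $Q_0\le \mu-|J|_g$ everywhere and $Q_0=\tfrac{3\lambda}{4}r^{-n-1}$ near infinity. Off a compact set this is fixed explicitly. On the compact part, positivity and continuity of $\mu-|J|_g$ allow smoothing from below. I set $Q:=Q_0|_{\Sigma_\infty}$, pulled back via the projection. Since $|\mathrm w|\le 1$, we have $\mu-J(\mathrm w)\ge \mu-|J|_g\ge Q$ on $\Sigma_\infty$. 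The decay bounds on $\nabla^lQ$ in \eqref{Qrho} are then immediate, because $\bar g$ is asymptotically flat by (iv) of Theorems \ref{thm:higher-dim-jang-limit} and \ref{thm:higher-dim-jang-limit-lundberg}.

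\emph{Choice of $\rho$ and the pointwise computation.} I take $\rho\equiv1$ on a large compact set and $\rho=1+cr^{2-n}$ for $r$ large, with the constant $c$ fixed below. The bounds on $\rho$ in \eqref{Qrho} then hold trivially.

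Next I insert the Schoen--Yau identity into the left side of \eqref{bw} and integrate by parts over $\Sigma_\infty^r$. No boundary terms arise near $\Sing(\bar\Sigma)$ or along the cylindrical ends, since $\phi$ vanishes near the singular set and has bounded support away from the AF end. Write $\rho\,\mathrm{div}X\,\phi^2=\mathrm{div}(\rho\phi^2X)-\phi^2X\cdot\nabla\rho-2\rho\phi X\cdot\nabla\phi$, and similarly $-\rho\Delta\log\rho\,\phi^2=-\mathrm{div}(\phi^2\nabla\rho)+\nabla\rho\cdot\nabla(\phi^2)+\rho|\nabla\log\rho|^2\phi^2$. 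Collecting terms, the integrand of \eqref{bw} becomes a perfect square plus remainders:
\begin{equation*}
\rho\,\bigl|\nabla\phi+\phi(X+\nabla\log\rho)\bigr|^2+\rho\bigl(\mu-J(\mathrm w)\bigr)\phi^2+\tfrac12\rho|h-k|^2\phi^2-\mathcal R\,\phi^2 .
\end{equation*}
The remainder $\mathcal R$ is a combination of $\rho|\nabla\log\rho|^2$ and $\rho\,|X|\,|\nabla\log\rho|$, coming from completing the square and from the coefficient $\frac{n+1}{n+2}$.

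These remainders are supported where $\rho$ is nonconstant, i.e.\ for large $r$. There $|\nabla\log\rho|^2=O(r^{2-2n})$. For $|X||\nabla\log\rho|$ I use Young's inequality and the decay of $X$ from (iv). Since $n\ge4$ gives $2n-2>n+1$, the remainders are $o(r^{-n-1})$. They are therefore absorbed by $\rho(\mu-J(\mathrm w))-\rho Q\ge \tfrac{\lambda}{4}r^{-n-1}\rho$ after enlarging the compact set on which $\rho\equiv1$. This is exactly why the quantitative bound $\lambda r^{-n-1}$ is assumed.

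\emph{Boundary term at infinity (main obstacle).} What remains is the flux
\begin{equation*}
\phi_\infty^2\int_{S_r}\bigl(\rho X+\nabla\rho\bigr)\cdot\nu\,dA .
\end{equation*}
The inequality \eqref{bw} holds once this flux is $\ge0$ (or has limit $\ge0$) as $r\to\infty$.

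In the AF case with harmonic asymptotics this is straightforward. Since $\mathrm w=O(r^{-q})$ and $h-k=O(r^{-q-1})$, we get $X=O(r^{-2q-1})$, and $2q+1>n-1$, so the $X$-flux vanishes in the limit. I then take $c=0$ and $\rho\equiv1$.

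In the AH case $\mathrm w\to$ the radial unit form, and $X$ decays only like $r^{1-n}$, so its flux has a finite limit $\kappa$. This limit is computable from the expansion of $f_\infty$ in Theorem \ref{thm:higher-dim-jang-limit-lundberg}(iv), in terms of $\boldsymbol\alpha$, $\mathbf m$ and $\mathbf p$. The flux of $\nabla\rho$ equals $-(n-2)c\,\omega_{n-1}+o(1)$. I therefore choose $c$ so that the total flux limit is $0$, namely $c=\kappa/((n-2)\omega_{n-1})$ with the appropriate sign; any $c$ making it nonnegative would also do.

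The delicate part is verifying that $X$ indeed has the stated $r^{1-n}$ leading order with $o(r^{1-n})$ error, so that $\kappa$ exists. This requires the $O_5(r^{2-n+\epsilon})$ control of $f_\infty$ together with the Wang asymptotics of $k$. Once that is established, letting $r\to\infty$ gives \eqref{bw}. The right-hand side there converges because $\rho Q\phi^2=O(r^{-n-1})$ is integrable.
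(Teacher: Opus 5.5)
Your proposal is correct and follows essentially the paper's route. You insert the Schoen--Yau identity, integrate $\operatorname{div}X$ and $\Delta\log\rho$ by parts to obtain $\rho|\nabla\phi+\phi(X+\nabla\log\rho)|^2$ plus remainders $-\frac{n+1}{2(n+2)}\phi^2|\nabla\rho|^2/\rho-\phi^2\langle\nabla\rho,X\rangle=O(r^{2-2n})$, absorb these with $\mu-|J|_g\ge\lambda r^{-n-1}$, take $\rho=1$ in the AF case, and in the AH case choose $c$ to cancel the $X$-flux. The paper takes the existence of your $\kappa$ from Lundberg's expansion $\langle X,\nu\rangle=(n-2)(n-3)\boldsymbol\alpha\,r^{1-n}+O(r^{-n+\epsilon})$, which gives $\kappa=(n-2)(n-3)\omega_{n-1}\alpha_0$, so your $c$ is exactly the paper's $(n-3)\alpha_0$.

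One small sign slip: with the outward normal the boundary term enters the left side as $-\phi_\infty^2\int_{S_r}(\rho X+\nabla\rho)\cdot\nu\,dA$. A nonzero limit would therefore need to be nonpositive, not nonnegative, but this is harmless since you choose $c$ so that it vanishes.
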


\begin{remark}
A limit appears on the left-hand side of \eqref{bw} due to the possible lack of integrability
of the Jang scalar curvature in the asymptotically hyperboloidal setting.
\end{remark}

\begin{proof}
We will treat the asymptotically hyperboloidal case first, and will assume for simplicity that the initial 
data possesses a single end. Let $\rho$ be a smooth positive function to be chosen. Utilizing the
Jang scalar curvature formula, Proposition \ref{SY scalar curvature id}, while integrating $\mathrm{div}_{\bar{g}}X$ and $\Delta_{\bar{g}}\log\rho$
by parts produces
\begin{align}\label{2}
&\int_{\Sigma_{\infty}^r}\left(\rho|\nabla \phi|_{\bar{g}}^2 +\frac{\rho}{2}\left(R_{\bar{g}}-2\Delta_{\bar{g}}\log\rho -\frac{n+1}{n+2}|\nabla\log\rho|_{\bar{g}}^2\right)\phi^2\right)dV_{\bar{g}} \nonumber\\
\geq&\int_{\Sigma_\infty^r}\left(\rho|\nabla \phi|_{\bar{g}}^2 +(\mu-|J|_g)\rho \phi^2+\rho \phi^2 |X|_{\bar{g}}^2 +\left(1-\frac{n+1}{2(n+2)}\right)\phi^2 \frac{|\nabla \rho|_{\bar{g}}^2}{\rho}\right)dV_{\bar{g}}\nonumber\\
&\qquad+\int_{\Sigma^r_\infty}\left(2\phi\langle\nabla \phi,\nabla\rho\rangle +\phi^2 \langle\nabla\rho,X\rangle +2\rho \phi\langle\nabla \phi,X\rangle\right)dV_{\bar{g}}\\
&\qquad\qquad-\int_{S_r}\left(\rho \phi^2 \langle X,\nu\rangle +\phi^2 \nu(\rho)\right)dA,\nonumber
\end{align}
where $S_r$ denotes a coordinate sphere with unit outer normal $\nu$ in the asymptotically flat end of $\Sigma_\infty$. Note that the Jang scalar curvature is not necessarily integrable in the asymptotically flat end, due to the expansion \cite[Lemma B.3]{LundbergHyperbolic}
\begin{equation}
R_{\bar{g}}=2(n-2)\frac{\Delta_{S^{n-1}}\boldsymbol{\alpha}}{r^{n}}+O(r^{-n-1+\epsilon})    
\end{equation}
for any $\epsilon>0$ and some $\boldsymbol{\alpha} \in C^{\infty}(S^{n-1})$.  However, the integral on the left-hand side of \eqref{2} is finite even in the limit as $r\rightarrow\infty$ since the leading term of this expansion integrates to zero on coordinate spheres. It follows that
\begin{align}\label{2.1}
\begin{split}
&\int_{\Sigma_\infty^r}\left(\rho|\nabla \phi|_{\bar{g}}^2 +\frac{\rho}{2}\left(R_{\bar{g}}-2\Delta_{\bar{g}}\log\rho -\frac{n+1}{n+2}|\nabla\log\rho|_{\bar{g}}^2\right)\phi^2\right)dV_{\bar{g}}\\
\geq&\int_{\Sigma_\infty^r}\left((\mu-|J|_g)\rho \phi^2 +\rho|\nabla \phi +\phi (X +\nabla\log\rho)|_{\bar{g}}^2\right)dV_{\bar{g}}\\
&-\int_{\Sigma_\infty^r}\left(\left(1-\frac{n+3}{2(n+2)}\right)\phi^2 \frac{|\nabla\rho|_{\bar{g}}^2}{\rho} +\phi^2 \langle\nabla\rho, X\rangle\right)dV_{\bar{g}}\\
&
-\int_{S_r}\rho \phi^2 \left(\langle X,\nu\rangle +\nu(\log\rho)\right)dA.
\end{split}
\end{align}
According to \cite[(B.15)]{LundbergHyperbolic} the flux density is 
\begin{equation}
\langle X,\nu\rangle=(n-2)(n-3)\frac{\boldsymbol{\alpha}}{r^{n-1}}+O(r^{-n +\epsilon}).
\end{equation}
This motivates the choice
\begin{equation} \label{rho alpha}
\rho=1+\psi \cdot \frac{(n-3)\alpha_0}{r^{n-2}},\quad\quad\quad \alpha_0 =\frac{1}{\omega_{n-1}}\int_{S^{n-1}}\boldsymbol{\alpha},
\end{equation}
where $\psi$ is a smooth nonnegative cut-off function which vanishes inside $S_{r_0}$ and is 1 outside $S_{2r_0}$ in the asymptotic end, and satisfies $|\nabla\psi|_{\bar{g}}\leq 2r_0^{-1}$. By \cite[(B.15) and (B.18)]{LundbergHyperbolic} we have $|X|_{\bar{g}}=O(r^{1-n})$, and thus the strict DEC
assumption $\mu-|J|_g\geq \lambda r^{-n-1}$ implies that
\begin{equation}
\mu -|J|_g\geq \left(1-\frac{n+3}{2(n+2)}\right)|\nabla\log\rho|^2 +\langle\nabla\log\rho, X\rangle
\end{equation}
if $r_0$ is chosen sufficiently large. Moreover, with this choice of $\rho$ we find that for $r$ large enough 
\begin{equation}
\langle X,\nu\rangle +\nu(\log\rho)=(n-2)(n-3)\frac{\boldsymbol{\alpha}-\alpha_0}{r^{n-1}}+O(r^{-n +\epsilon}).
\end{equation}
Hence, the flux integral in \eqref{2.1} converges to zero, and we obtain
\begin{align}\label{2.2}
\begin{split}
\lim_{r\rightarrow\infty}&\int_{\Sigma_\infty^r}\left(\rho\,|\nabla \phi|_{\bar{g}}^2 +\frac{\rho}{2}\left(R_{\bar{g}}-2\Delta_{\bar{g}}\log\rho -\frac{n+1}{n+2}|\nabla\log\rho|_{\bar{g}}^2\right)\phi^2\right)dV_{\bar{g}}\\
\geq&\int_{\Sigma_\infty}\left( \rho\, Q\, \phi^2  +\rho|\nabla \phi +\phi (X +\nabla\log\rho)|_{\bar{g}}^2\right)dV_{\bar{g}},
\end{split}
\end{align}
for some positive smooth function $Q$ satisfying \eqref{Qrho} such that $Q\geq \frac{\lambda}{2} r^{-n-1}$ for large $r$.

\medskip

In the case of asymptotically flat initial data, an analogous argument holds with many simplifications. In particular,
we may choose $\rho=1$, the Jang scalar curvature is integrable in the asymptotically flat end, and there are no flux terms arising from the vector field $X$.
\end{proof}

Lemma~\ref{lem:jang-stability} is the analogue, in the present Jang--graph
setting, of \cite[Corollary~3.31]{BrendleWang}. It provides the coercive
estimate needed for the conformal blow-up argument. 

\medskip

We follow \cite[Section 3.7]{BrendleWang} to blow up the singular set in order to obtain a complete manifold without singularities. However, in our setting the singular set may not be compact, which differs from the situation in \cite{BrendleWang}. Therefore, we shall modify the proof accordingly.

\begin{theorem}\label{thm: singularities cylinder} 
Let $(M^n,g,k)$ be as in Lemma \ref{lem:jang-stability}, with
$(\Sigma_\infty,\bar{g})$ denoting the associated Jang graph. In what follows, $\nabla$ and $\Delta$ will be the gradient and Laplacian on $(\Sigma_\infty,\bar{g})$.
    \begin{enumerate}
        \item[\rm (i)] Let $\check{M}=M^n\times \mathbb{R}$ be equipped with the product metric $\check{g}$, $\overline{\Sigma}_\infty$ is the closure of $\Sigma_\infty$ in $\check{M}$, and let $\mathcal{S}$ denote the singular set in $\overline{\Sigma}_\infty$. Then $\mathcal{S}$ can be decomposed into a union of compact sets $\mathcal{S}_i$ such that \[d_{(\check{M},\check{g})}(\mathcal{S}_i,\mathcal{S}_j)\ge |i-j|-1.\]
        \item[\rm (ii)]  There exists a function $\Psi_i\in C^2(\check{M}\setminus \mathcal{S}_i)$ supported on a small neighborhood of $\mathcal{S}_i$ such that for $x\in \Sigma_\infty$ the following statements hold.
        \begin{enumerate}
            \item[\rm (a)] There exists a constant $C_i>1$ for which
        \[\Delta \Psi_i+\frac{n-2}{n+2}\langle \nabla \log \rho,\nabla \Psi_i\rangle\le C_i.\]  
            \item[\rm (b)] If $d_{(\check{M},\check{g})}(x,\mathcal{S}_i) $ is sufficiently small, then
        \begin{equation*}
        \begin{split}
            &\Delta \Psi_i+\frac{n-2}{n+2}\langle \nabla \log \rho,\nabla \Psi_i\rangle<0, \\ 
            &\Psi_i(x)\ge \frac{1}{2(n-2)}3^{2-n}d_{(\check{M},\check{g})}(x,\mathcal{S}_i)^{-2}. 
        \end{split} 
        \end{equation*}
        \end{enumerate}
        \item[\rm (iii)] Let $\overline{\Psi}=\sum_{i=1}^\infty C_i^{-1}\Psi_i$.  Then there exists $\varepsilon_0>0$ such that on $\Sigma_\infty$ the following inequality holds
        \begin{equation*}
            \varepsilon_0 \left(\Delta \overline\Psi+\frac{n-2}{n+2}\langle \nabla \log \rho,\nabla \overline\Psi\rangle\right) \le \frac{n-2}{4(n+2)}Q,
            \end{equation*}
            where $Q$ is given by Lemma \ref{lem:jang-stability}.
                 \item[\rm (iv)] Let $w=1+\varepsilon_0\overline{\Psi}$ and $\tilde{g}=w^\frac{n+2}{n-2}\bar{g}$, then $(\Sigma_\infty, \tilde{g})$ is complete.  
    \end{enumerate}
\end{theorem}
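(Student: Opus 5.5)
The plan is to follow \cite[Section 3.7]{BrendleWang} chunk by chunk along the (possibly noncompact) singular set, and then sum the local barriers with weights.

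\textbf{Step (i): decomposing $\mathcal S$.} The set $\mathcal{S}$ is closed in $\check M$. Outside a compact region of $M^n$, $\Sigma_\infty$ is smooth by Theorem \ref{thm:higher-dim-jang-limit}(iv) or \ref{thm:higher-dim-jang-limit-lundberg}(iv). Hence $\mathcal S$ projects to a compact subset of $M^n$, and it can be unbounded only in the $\mathbb R$-direction, along asymptotically cylindrical ends over MOTS. I would set
\[
\mathcal S_i:=\mathcal S\cap\bigl(M^n\times([i-1,i]\cup[-i,-i+1])\bigr),
\]
with $\mathcal S_1$ taking the slab $[-1,1]$. Each $\mathcal S_i$ is closed and bounded, hence compact. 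Since $\check g$ is a product metric, the $t$-coordinate is $1$-Lipschitz, which gives $d(\mathcal S_i,\mathcal S_j)\ge |i-j|-1$.

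\textbf{Step (ii): local barriers $\Psi_i$.} For each compact $\mathcal S_i$ I would reproduce Brendle--Wang's construction. Theorem \ref{thm:Jang} gives $\dim_{\mathcal M}\mathcal S_i\le n-7<n-4$. So for small $\delta$ one can cover $\mathcal S_i$ by $N(\delta)\lesssim \delta^{-(n-4)+\eta}$ balls $B_\delta(p_k)$ with $p_k\in\mathcal S_i$, at dyadic scales $\delta=3^{-m}$.

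Following \cite{BrendleWang}, set $\Psi_i=\sum_{m}\sum_k c_{m}\,\chi\,G_{p_k,3^{-m}}$. Here $G$ is a truncated ambient Green-type function $d(\cdot,p_k)^{2-n}$ of $\check M$, restricted to $\Sigma_\infty$, and the weights are arranged so that $\Psi_i\gtrsim d(x,\mathcal S_i)^{-2}$. The scale-$3^{-m}$ contributions sum to size $3^{2m}$ near $\mathcal S_i$, which produces the constant $\tfrac{1}{2(n-2)}3^{2-n}$.

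Restricting to the hypersurface, the Laplacian of $d^{2-n}$ picks up the mean curvature term. $H$ is locally bounded, being $\tr k$ by the Jang equation, and the drift term $\frac{n-2}{n+2}\langle\nabla\log\rho,\nabla\Psi_i\rangle$ is lower order near $\mathcal S_i$ since $\rho$ is smooth there. Thus close to $\mathcal S_i$ the superharmonic part dominates, giving (b). Away from $\mathcal S_i$ everything is smooth with compact support, giving the bound $C_i$ in (a). Convergence of the multi-scale sum uses the Minkowski bound, which is exactly why the codimension estimate is needed. Almost-minimality is used to get density bounds and controlled volume of $\Sigma_\infty\cap B_r$.

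\textbf{Steps (iii) and (iv): summing and completeness.} Make the supports of the $\Psi_i$ lie in neighborhoods of width less than $1/4$, so by (i) each point meets at most three $\Psi_i$. Then $\overline\Psi$ is locally finite, and by (a) it satisfies
\[
\Delta\overline\Psi+\tfrac{n-2}{n+2}\langle\nabla\log\rho,\nabla\overline\Psi\rangle\le 3
\]
away from the singular set. Where the operator is negative, (iii) is automatic.

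The support of $\overline\Psi$ lies in a region projecting to a compact set of $M^n$. There $Q$ has a positive lower bound, except possibly along the cylindrical ends where $t\to\pm\infty$.

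\textbf{Main obstacle.} The hardest step is this uniformity. I would handle it in two ways:
\begin{itemize}
\item[\rm (1)] Use that $Q$ from Lemma \ref{lem:jang-stability} is positive and continuous and $\inf Q$ over the compact projection is positive. The graph converges to a cylinder with translation-invariant geometry, so $Q$ stays bounded below along those ends.
\item[\rm (2)] Alternatively, replace $C_i^{-1}$ by smaller weights $\epsilon_i C_i^{-1}$, chosen with $\epsilon_i\le \inf_{\mathrm{supp}\Psi_i}Q$.
\end{itemize}
Then choose $\varepsilon_0$ small to get (iii).

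For (iv), $w\ge c\,d(x,\mathcal S)^{-2}$ near $\mathcal S$. Hence $\tilde g\ge c\,d^{-2\frac{n+2}{n-2}}\bar g$, and since $2\frac{n+2}{n-2}>2$, curves approaching $\mathcal S$ have infinite length. Elsewhere $w\ge1$ and $\bar g$ is complete, with the asymptotic end and cylindrical ends complete. Therefore $\tilde g$ is complete.
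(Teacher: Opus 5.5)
Your proposal is correct and follows the paper's proof essentially step by step. The paper uses the same unit slabs in the $\mathbb{R}$-direction (its $\Omega_i$, cut by levels of $|f_\infty|$) and Brendle--Wang's barrier on each compact $\mathcal{S}_i$. For (iii) it uses local finiteness of $\sum_i C_i^{-1}\Psi_i$ together with a positive lower bound for $Q$, which on the compact core is comparable to $\mu-|J|_g$ pulled back from $M^n$; so your option (1) is the one the paper uses. For (iv) it uses the Brendle--Wang length estimate. The one point to tighten is that the constant in $w\ge c\,d(x,\mathcal S)^{-2}$ depends on $i$ through $C_i$ and through the threshold in (ii)(b), so it is only locally uniform. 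That is enough for completeness, and the paper handles it exactly this way, by restricting to the finitely many slabs $\Omega_i$ that a given path meets.
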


\begin{proof}
    (i) Note that the singular set can only concentrate along the cylindrical ends, and the asymptotically flat end is regular 
    according to Theorems \ref{thm:higher-dim-jang-limit} and \ref{thm:higher-dim-jang-limit-lundberg}. Set 
    \begin{equation}
        \Omega_i:=\overline{\{ t_{i-1}\le |f_\infty|\le t_i\}},\qquad \mathcal{S}_i:=\mathcal{S}\cap \Omega_i,
    \end{equation} 
    with $t_0 = 0$ and $t_i$ ($i>0$) chosen large enough so that $d_{(\check{M},\check{g})}(\Omega_i, \Omega_j) \ge |i-j|-1$. Therefore, each $\mathcal{S}_i$ is compact and $d_{(\check{M},\check{g})}(\mathcal{S}_i, \mathcal{S}_j)\ge |i-j|-1$.

\medskip

    (ii) Following \cite[page 31]{BrendleWang}, choose $t_* \in (0,1)$ such that $\sqrt{t_*} \le \frac{1}{2}\operatorname{inj}_{(\check{M},\check{g})}(p)$ for all $p \in \mathcal{S}$. Construct $\Psi_i$ as in \cite[page 32]{BrendleWang}. Properties (a) and (b) then follow from \cite[Proposition 3.34, 3.35, 3.36]{BrendleWang}.

    \medskip
    
    (iii) Since $\Psi_i$ is supported on a small neighborhood of $\mathcal{S}_i$, it follows that $\Psi_i\Psi_{j}=0$ whenever $|i-j|>1$. 
     Note that by Lemma \ref{lem:jang-stability}, 
     $Q$ is uniformly positive on cylindrical ends, and thus there exists a suitable $\varepsilon_0>0$ as required.

\medskip
     
    (iv) Let $y_0, y_1\in \Sigma_\infty$ and let $\sigma:[0,1]\to \Sigma_\infty$ be a smooth path connecting them. Define
    \begin{equation}
     I:=\{i : \Omega_i\cap \sigma([0,1])\neq \emptyset\}.
    \end{equation}
   Let $c_i$ and $\varepsilon_i$ be positive constants such that
    \begin{equation}
    w(x)\ge \varepsilon_i d_{(\check{M},\check{g})}(x,\mathcal{S})^{-2}\quad \text{for }  d_{(\check{M},\check{g})}(x,\mathcal{S}_i)\in (0, c_i].
    \end{equation}
    Thus, on the region $\cup_{i\in I}\Omega_i$, we have 
    \begin{equation}
    w(x)\ge \min_{i\in I}\{\varepsilon_i\} d_{(\check{M},\check{g})}(x,\mathcal{S})^{-2}\quad \text{for }  d_{(\check{M},\check{g})}(x,\mathcal{S}_i)\in (0, \min_{i\in I}\{c_i\}].
    \end{equation}
Following \cite[Proposition 3.41]{BrendleWang}, this implies
   \begin{equation}
   \begin{split}
       d_{(\check{M},\check{g})}(y_1,\mathcal{S}_l)^{-\frac{4}{n-2}}\le& \max\left\{d_{(\check{M},\check{g})}(y_0,\mathcal{S}_l)^{-\frac{4}{n-2}},\max_{i\in I}\{c_i^{-\frac{4}{n-2}}
       \}\right\}
       \\ &+\frac{4}{n-2}\max_{i\in I}\left\{\varepsilon_i^{-\frac{n+2}{2(n-2)}}\right\}\int_0^1|\sigma'(t)|_{\tilde{g}}dt,
   \end{split}
   \end{equation}
where $l$ satisfies $y_1\in\Omega_l$.
      On the other hand, since $d_{(\check{M},\check{g})}(\Omega_i, \Omega_j)\ge |i-j|-1$, we have $|\sigma|_{\tilde{g}}\ge |I|-1$. 
   Therefore, $d_{(\Sigma_\infty,\tilde{g})}(y_0,y_1)\to \infty$ when $d_{(\check{M},\check{g})}(y_1,\mathcal{S}_l)\to 0$. Hence, $\tilde{g}$ is complete.
\end{proof}

\begin{proof}[Proof of Theorem \ref{thm:main2}] 
The result is known in dimensions $3\leq n\leq 7$, so assume that $n\geq 8$. By Corollary \ref{cor:AH-reduction}, to prove $E\ge |P|$ it suffices to show that $E\ge 0$. By Theorem \ref{DS density} it further suffices to restrict attention to Wang asymptotics, and assume a strict dominant energy condition. Wang asymptotics allow for an application of Theorem \ref{thm:higher-dim-jang-limit-lundberg} to construct a Jang graph $(\Sigma_{\infty},\bar{g})$ with an asymptotically flat end having energy
$\bar{E}_{ADM}=(n-1)E$. According to Lemma \ref{lem:jang-stability} the Jang graph satisfies \eqref{bw}.
Moreover, this manifold may be made complete by applying the conformal transformation of Theorem \ref{thm: singularities cylinder} to obtain a complete Jang graph $(\Sigma_{\infty},\tilde{g})$, whose asymptotically flat end is preserved without change. Observe that Theorem \ref{thm: singularities cylinder} (iii) implies that the inequality \eqref{bw} still holds for this conformal Jang graph by \cite[Proposition 3.40]{BrendleWang}, with modified choices $\tilde{\rho}$, $\tilde{Q}$ of $\rho$, $Q$. In fact, the slightly stronger inequality \eqref{2.2'} holds and $\tilde{Q}\geq \frac{\lambda}{4}r^{-n-1}$ in the asymptotically flat end. We can then utilize Proposition \ref{Rg} to find a perturbation $(\Sigma_{\infty},\hat{g})$ of the conformal Jang graph with Schwarzschild asymptotics, while preserving the ADM energy. Additionally, Proposition \ref{Rg} guarantees that $(\Sigma_{\infty},\hat{g})$ satisfies \eqref{bw}. This manifold is then a so called $n$-dataset as defined by Brendle-Wang, and therefore \cite[Theorem 1.5]{BrendleWang} implies that
the following mass quantity is nonnegative
\begin{equation}
(n-1)\hat{c}+2(n-3)\alpha_0\geq 0,
\end{equation}
where $\alpha_0$ arises from \eqref{rho alpha}, and $\hat{g}=(1+\hat{c}r^{2-n})\delta$ in the asymptotic end. Note that $\hat{E}_{ADM}=\frac{n-2}{2}\hat{c}$, and by \cite[(3.4) and (C.1)]{LundbergHyperbolic} we find $\alpha_0 =-\frac{1}{n-3}\hat{E}_{ADM}$.
It follows that
\begin{equation}
0\leq (n-1)\hat{c}+2(n-3)\alpha_0 =\frac{2}{n-2}\hat{E}_{ADM}.
\end{equation}
Since $E=\frac{\bar{E}_{ADM}}{n-1}$ and $\bar{E}_{ADM}=\hat{E}_{ADM}$, we conclude that $E\geq 0$. Finally, the case of equality statement follows from \cite{HirschJangZhangHyperboloidal,HuangJangMartin}.
\end{proof}

\begin{proof}[Proof of Theorem \ref{thm:main}]
The result is known in dimensions $3\leq n\leq 7$, so assume that $n\geq 8$.
To prove the inequality portion of the result, we proceed by contradiction and assume that
$E_{\mathrm{ADM}}< |P_{\mathrm{ADM}}|$. By Theorem \ref{thm:reduction} the initial data may then be
perturbed (keeping the same notation) to achieve a strict dominant energy condition, harmonic asymptotics, and $E_{ADM}<0$.
According to Theorem \ref{thm:higher-dim-jang-limit} and Lemma \ref{lem:jang-stability}, there exists an associated Jang graph $(\Sigma_{\infty},\bar{g})$ which satisfies \eqref{bw} and has an asymptotically flat end of the same ADM energy. 
This manifold may not be complete. However, we may then apply the conformal transformation of Theorem \ref{thm: singularities cylinder} to obtain a complete Jang graph $(\Sigma_{\infty},\tilde{g})$, whose asymptotically flat end is preserved without change. In light of Theorem \ref{thm: singularities cylinder} (iii), the inequality \eqref{bw} still holds for this conformal Jang graph by \cite[Proposition 3.40]{BrendleWang}, with modified choices $\tilde{\rho}$, $\tilde{Q}$ of $\rho$, $Q$. In fact, the slightly stronger inequality \eqref{2.2'} holds and $\tilde{Q}\geq \frac{\lambda}{4}r^{-n-1}$ in the asymptotically flat end.
Next, observe that Proposition \ref{Rg} provides a perturbation $(\Sigma_{\infty},\hat{g})$ of the conformal Jang graph with Schwarzschild asymptotics, while maintaining negative ADM energy $\hat{E}_{ADM}<0$. Moreover, Proposition \ref{Rg} guarantees that $(\Sigma_{\infty},\hat{g})$ satisfies \eqref{bw}. This manifold is then a so called $n$-dataset as defined by Brendle-Wang, and therefore \cite[Theorem 1.5]{BrendleWang} implies that $\hat{E}_{ADM}\geq 0$, yielding a contradiction.

\medskip

Now consider the case $E_{\mathrm{ADM}}=|P_{\mathrm{ADM}}|$.
We apply \cite{HirschHuang} to deduce that $(M^n,g,k)$ embeds into a spacetime $(\mathbf M^{n+1},\mathbf g)$ containing a null parallel vector field in case $(g, k)\in C^5_{\mathrm{loc}}(M^n)\times C^4_{\mathrm{loc}}(M^n)$.
In this case, we may apply \cite{HirschZhang2} to find that the ambient metric may be written in Brinkmann
coordinates as
\begin{equation}
\mathbf{g}
=
-2\,dt\,du
+
F(x,u)\,du^2
+
(dx^1)^2+\cdots+(dx^{n-1})^2,
\end{equation}
where $F$ is independent of $t$ and satisfies
\begin{equation}
\Delta_{\mathbb R^{n-1}}F(\cdot,u)\le 0
\qquad\text{for every }u\in\mathbb R.
\end{equation}
This proves the claim.
\end{proof}

\section{Asymptotically Hyperboloidal Positive Mass Theorem Via the Asymptotically Flat Positive Mass Theorem}\label{sec:hyperbolic}

In this section we explain that Theorem \ref{thm:main} implies
Theorem \ref{thm:main2} under stronger asymptotic assumptions at infinity.

\subsection{Proof for Wang asymptotics}

In the Riemannian $(k=g)$ case, the desired inequality for asymptotically hyperboloidal
manifolds with Wang asymptotics follows directly from the recent work of
Chru\'sciel and Delay \cite{ChruscielDelayHyperbolic}. More precisely, they show
that the causal-future-directed character of the asymptotically hyperboloidal
energy--momentum vector can be reduced to the spacetime positive mass theorem
for asymptotically Euclidean initial data sets.

\medskip

Therefore, combining their reduction with Theorem \ref{thm:main}, we obtain the
hyperbolic positive mass theorem for asymptotically hyperboloidal Riemannian
manifolds with Wang asymptotics. In particular, if $(M^n,g)$ is an
asymptotically hyperbolic manifold with Wang asymptotics and scalar curvature
\begin{equation}
R_g\ge -n(n-1),
\end{equation}
then its energy--momentum vector $(E,P)$ satisfies
\begin{equation}
E\ge |P|.
\end{equation}

\subsection{Proof for exact AdS--Schwarzschild asymptotics}

We now explain how Theorem \ref{thm:main} yields the hyperbolic positive mass
theorem for initial data sets which are exactly AdS--Schwarzschild near
infinity.

\medskip

Assume that $(M^n,g,k)$ is an asymptotically hyperboloidal initial data set
satisfying the dominant energy condition, and that for some compact set
$\mathcal{C}\subset M^n$ the exterior region $M^n\setminus \mathcal{C}$ is realized as an umbilic spacelike
hypersurface in the Schwarzschild spacetime of mass $m$, with induced metric
equal to that of the corresponding AdS--Schwarzschild constant time slice.

\medskip

Since both the asymptotically hyperboloidal AdS--Schwarzschild slice and the
standard asymptotically flat Schwarzschild slice occur in the same ambient
Schwarzschild spacetime, we may bend the given hypersurface in the exterior
region so as to replace the hyperboloidal end by an asymptotically flat end,
while keeping the data unchanged on a sufficiently large compact set. In this
way one obtains an asymptotically flat initial data set
\begin{equation}
(\widetilde M^n,\widetilde g,\widetilde k),
\end{equation}
which still satisfies the dominant energy condition and whose asymptotically
flat end is exactly Schwarzschild of mass $m$.
Applying Theorem \ref{thm:main} to
$(\widetilde M^n,\widetilde g,\widetilde k)$ we conclude that $m\ge0$, and in particular $E\ge|P|$.

\appendix

\section{Density Lemma}\label{appendix}

The purpose of this section is to perform a deformation applicable to the Jang graph metric, which produces a Schwarzschild end while preserving the weighted inequality for the scalar curvature \eqref{bw} (cf. Proposition \ref{Rg}), and ensuring that the total energy remains unchanged.

\medskip

Let $(\Sigma_{\infty},\bar{g})$ be the Jang graph as in the context of Lemma \ref{lem:jang-stability}.
Although this is not necessarily complete, we may apply the conformal transformation of Theorem \ref{thm: singularities cylinder} to obtain a complete Jang graph $(\Sigma_{\infty},\tilde{g})$, whose asymptotically flat end is preserved without change. Moreover, Theorem \ref{thm: singularities cylinder} (iii) implies that inequality \eqref{2.2} still holds for this conformal Jang graph by the proof of \cite[Proposition 3.40]{BrendleWang}, with modified functions $\tilde{\rho}=w^{-\frac{n+2}{2}}\rho$ and $\tilde{Q}=\frac{1}{2}w^{-\frac{n+2}{n-2}}Q$. In particular
\begin{equation}\label{2.2'}
\lim_{r\rightarrow\infty}I_r(\tilde{g})[\phi] 
\geq\int_{\Sigma_\infty}\left( \tilde{\rho}\, \tilde{Q}\, \phi^2  +\tilde{\rho}|\nabla \phi +\phi (X +\nabla\log\tilde{\rho})|_{\tilde{g}}^2\right)dV_{\tilde{g}},
\end{equation}
where
\begin{equation}
I_r(\tilde{g})[\phi]\!:=\!\!\int_{\Sigma^r_\infty}\!\!\left(\tilde{\rho}|\nabla \phi|_{\tilde{g}}^2 +\frac{\tilde{\rho}}{2}\left(R_{\tilde{g}}-2\Delta_{\tilde{g}}\log\tilde{\rho} -\frac{n+1}{n+2}|\nabla\log\tilde{\rho}|_{\tilde{g}}^2\right)\phi^2 \right)dV_{\tilde{g}}.
\end{equation}

\begin{proposition} \label{Rg}
Let $(\Sigma_{\infty},\tilde{g})$ be the complete conformal Jang graph as described above. Then
for each sufficiently large \(r_0\in\mathbb{R}_+\), there exists a perturbation $\hat{g}$ of $\tilde{g}$ with the
following properties:
\[
        \hat g=\tilde g
        \qquad\text{on } \{r\leq r_0\},
\]
\[
        \hat g
        =
        (1+\hat{c} r^{2-n})\delta
        \qquad\text{on } \{r\geq 2r_0\},
\]
with $\hat{c}\in \mathbb{R}$ chosen so that
\[
        E_{\mathrm{ADM}}(\hat g)=E_{\mathrm{ADM}}(\tilde g).
\]
Furthermore
\begin{equation*}
       I(\hat{g})[\phi]:=\lim_{r\rightarrow\infty}I_r(\hat{g})[\phi]\ge \int_{\Sigma_\infty}\hat{\rho}\hat{Q}\phi^2 dV_{\hat{g}},
\end{equation*}
for all test functions $\phi$ as in Lemma \ref{lem:jang-stability} where $\hat{\rho}=\tilde{\rho}$ and $\hat{Q}=\frac{1}{2}\tilde{Q}$.
\end{proposition}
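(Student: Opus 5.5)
We construct $\hat g$ by a cutoff interpolation in the asymptotically flat end, in the spirit of the Corvino/Schwarzschild gluing density arguments used by Brendle--Wang and Eichmair--Huang--Lee--Schoen. Nothing is changed on the compact part of the manifold, so the only thing to check is that the coercive inequality survives on the annulus $\{r_0\le r\le 2r_0\}$.

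\emph{Construction.} In the asymptotically flat end, $\tilde g=\bar g$, because the conformal factor $w$ equals $1$ there. Write $\tilde g=(1+\hat c r^{2-n})\delta+\gamma$. By Theorem~\ref{thm:higher-dim-jang-limit} (iv), respectively Theorem~\ref{thm:higher-dim-jang-limit-lundberg} (iv), the remainder satisfies $|\partial^l\gamma|=O(r^{2-n-l})$ at leading order. We choose $\hat c$ so that the ADM energies agree, namely $\hat c=\frac{2}{n-2}E_{\mathrm{ADM}}(\tilde g)$; this is computed in the chart of \cite[Proposition C.1]{LundbergHyperbolic} in the hyperboloidal case. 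Let $\chi$ be a cutoff with $\chi=1$ on $\{r\le r_0\}$, $\chi=0$ on $\{r\ge 2r_0\}$, and $|\partial^l\chi|\le C r_0^{-l}$. Set $\hat g=(1+\hat c r^{2-n})\delta+\chi\gamma$. The energy is preserved by construction, since outside $2r_0$ the metric $\hat g$ is exactly Schwarzschild with the chosen $\hat c$. We keep $\hat\rho=\tilde\rho$. For $r\ge 2r_0$, $\tilde\rho=\rho=1+(n-3)\alpha_0 r^{2-n}$ (or $1$ in the AF case), which is harmonic with respect to the Schwarzschild metric to leading order.

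\emph{Error estimates.} On the annulus we have
\[
|R_{\hat g}-R_{\tilde g}|+|\Delta_{\hat g}\log\hat\rho-\Delta_{\tilde g}\log\tilde\rho|+\bigl||\nabla\log\hat\rho|^2_{\hat g}-|\nabla\log\tilde\rho|^2_{\tilde g}\bigr|\le C r_0^{-n}.
\]
This leading order is too crude, since $\tilde Q\sim r^{-n-1}$ is smaller than $r_0^{-n}$. So the estimate has to be split. The $O(r^{-n})$ piece of $R_{\tilde g}$, namely $2(n-2)\Delta_{S^{n-1}}\boldsymbol\alpha\, r^{-n}$, has zero spherical mean. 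The remaining quantities, including $\Delta\log\rho$ and the Schwarzschild part, are harmonic or exact to leading order, and the genuinely non-cancelling errors come from $\gamma$'s subleading part, which is $O(r^{1-n})$ with derivatives. We therefore handle the oscillatory term, plus the divergence-form pieces, by integrating by parts against $\phi^2$. This follows how the flux term $\langle X,\nu\rangle$ was absorbed in Lemma~\ref{lem:jang-stability}: we rewrite the difference of integrands as $\operatorname{div}(Y)$ plus a remainder, with $|Y|=O(r^{1-n})$ and remainder $O(r^{-n-1-\epsilon})$ after choosing $\hat c$ and $\alpha_0$ correctly. Any boundary contributions then sit on $S_r$ for $r\ge 2r_0$, where $\phi$ is constant and the fluxes of $\hat g$ match the fluxes used to define $\hat c$ and $\alpha_0$, so they vanish in the limit $r\to\infty$.

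\emph{Absorbing the errors.} We absorb $\int\hat\rho\,\phi\,\langle Y,\nabla\phi\rangle$ via Cauchy--Schwarz into the gradient term $\tilde\rho|\nabla\phi+\phi(X+\nabla\log\tilde\rho)|^2$ that sits on the right of \eqref{2.2'}, together with a term $\phi^2|Y|^2=O(r^{2-2n})\phi^2$. That term is $\ll \lambda r^{-n-1}$ once $n\ge 4$ and $r_0$ is large. For the Schwarzschild region $r\ge 2r_0$ we use $R_{\hat g}=0$ directly, and the lost positivity there is compensated by the same Hardy-type absorption. Comparing the volume forms and $\tilde\rho$ weights costs factors $1+O(r_0^{2-n})$. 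Altogether, for $r_0$ large this yields $I(\hat g)[\phi]\ge\int\hat\rho\,\tfrac12\tilde Q\,\phi^2$.

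The main obstacle is the second step: showing that after integration by parts every non-decaying $r^{-n}$ error is either of divergence form with vanishing flux or of zero spherical mean, so that only $o(r^{-n-1})$ remains. If that fails, the fallback is to also modify $\hat\rho$ on the annulus, solving a linear equation for a correction $\hat\rho=\tilde\rho(1+\eta)$ with $\eta$ compactly supported, which kills the leading-order error exactly in the spirit of Corvino gluing.
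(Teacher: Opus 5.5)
Your proposal is correct and is essentially the paper's argument. The paper uses the same cutoff interpolation $\hat g=(1+\hat c r^{2-n})\delta+\chi\gamma$ with $\hat c=\frac{2}{n-2}E_{\mathrm{ADM}}(\tilde g)$ and $\hat\rho=\tilde\rho$, and compares $I_r(\hat g)$ with $I_r(\tilde g)$. It integrates the $O(r^{-n})$ scalar-curvature difference by parts, with the flux at infinity vanishing because the energies match, and absorbs the resulting $O(r^{1-n})\phi|\nabla\phi|$ and $O(r^{2-2n})\phi^2$ errors into the gradient square and the $\tilde Q\ge\frac{\lambda}{4}r^{-n-1}$ term of \eqref{2.2'}, using $n\ge 4$. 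The step you flag as the main obstacle is immediate from $R_{\delta+h}=\partial_i\bigl(\partial_j h^{ij}-\partial^i\operatorname{tr}_\delta h\bigr)+O\bigl(|h||\partial^2h|+|\partial h|^2\bigr)$, applied to $\hat h-\tilde h=O_2(r^{2-n})$ on all of $\{r\ge r_0\}$; note that the metric changes on the whole exterior, not just the annulus, and that $R_{\hat g}=O(r^{2-2n})$ there rather than $0$, which is harmless. So neither the spherical-mean observation nor the fallback modification of $\hat\rho$ is needed.
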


\begin{remark}
Note that since $R_{\hat{g}}\in L^1$ in the asymptotically flat end, it follows that $I(\hat{g})$ is well-defined and invariant under any choice of exhaustion.
\end{remark}

\begin{proof}
Let \(\chi\in C^\infty([0,\infty))\) satisfy
\begin{equation}
        \chi(s)=0 \quad\text{for } s\leq 1,
        \qquad
        \chi(s)=1 \quad\text{for } s\geq 2,
        \qquad
        0\leq \chi\leq 1,
\end{equation}
and set $\chi_{r_0}(r):=\chi(r/r_0)$ so that
\begin{equation*}
        \chi_{r_0}=0 \quad\text{for } r\leq r_0,
        \quad
        \chi_{r_0}=1 \quad\text{for } r\geq 2r_0,\quad
        |\partial^l \chi_{r_0}|
        \leq C_l r^{-l} \quad\text{for } r_0\leq r\leq 2r_0.
\end{equation*}
Define \(\tilde g\) on the asymptotically flat end by
\begin{equation}
        \hat g
        :=
        (1-\chi_{r_0})\tilde g+\chi_{r_0}(1+\hat{c}r^{2-n})\delta,
\end{equation}
and set \(\hat g=\tilde g\) away from this end. By choosing $\hat{c}=\frac{2}{n-2}E_{ADM}(\bar{g})$
we find that $E_{ADM}(\hat{g})=E_{ADM}(\tilde{g})$. It remains to establish the desired lower bound
for $I(\hat{g})$.

\medskip

We begin by comparing functionals with respect to the two metrics. Set
\begin{equation}
\mathcal{R}_{\hat{g}}:=R_{\hat g}-2\Delta_{\hat g}\log\hat{\rho}
-c_n|\nabla\log\hat{\rho}|_{\hat{g}}^2 ,\qquad c_n =\frac{n+1}{n+2},\qquad \hat{\rho}=\tilde{\rho},
\end{equation}
and observe that
\begin{align}\label{Ig-Ig}
I_r(\hat g)[\phi]-I_r(\tilde g)[\phi]
&=\!
\int_{\Sigma^r_\infty}
\left(
    \tilde{\rho} |\nabla\phi|_{\hat g}^2
    +\frac{\tilde{\rho}}{2}\mathcal R_{\hat g}\phi^2
\right)
\left(
    \sqrt{\det \tilde{g}^{-1}\hat g}-1
\right)dV_{\tilde{g}}
\notag\\
&+
\int_{\Sigma^r_\infty}
\left[
    \tilde{\rho}(\hat g^{ij}-\tilde g^{ij})\phi_i\phi_j
    +\frac{\tilde{\rho}}{2}(R_{\hat g}-R_{\tilde g})\phi^2
\right]dV_{\tilde g}
\\
&-\!\!
\int_{\Sigma^r_\infty}\!\!\!
\frac{\tilde{\rho}\phi^2}{2}
\!\left[
    2(\Delta_{\hat g}-\Delta_{\tilde g})\log\tilde{\rho}
    +c_n(\hat g^{ij}\!-\tilde g^{ij})
        (\log\tilde{\rho})_i(\log\tilde{\rho})_j
\right]\!dV_{\tilde g}.\notag
\end{align}

\medskip

We will first estimate the scalar curvature integral. Write $\hat h:=\hat g-\delta$ and $\tilde h:=\tilde g-\delta$, 
and recall that the expansion of scalar curvature about the Euclidean metric is given by
\begin{equation}
        R_{\delta+h}
        =
        L_\delta h+\mathcal{Q}(h,\partial h,\partial^2h),
\end{equation}
where the linearization and quadratic error term satisfy 
\begin{equation}
        L_\delta h
        =
        \partial_i\partial_j h^{ij}
        -
        \Delta_\delta(\operatorname{tr}_\delta h),
\qquad
        |\mathcal{Q}(h,\partial h,\partial^2h)|
        \leq
        C\bigl(|h||\partial^2h|+|\partial h|^2\bigr).
\end{equation}
Since $\hat h, \tilde h=O_2(r^{2-n})$ with constants independent of large \(r_0\), we have
\begin{equation}
       \mathcal{Q}(\hat h,\partial\hat h,\partial^2\hat h)
        =
        O(r^{2-2n}),
\qquad
      \mathcal{Q}(\tilde h,\partial\tilde h,\partial^2\tilde h)
        =
        O(r^{2-2n}).
\end{equation}
Utilizing $\hat{g}=\tilde{g}$ for $r\le r_0$ and
\begin{equation}
R_{\hat{g}}-R_{\tilde{g}}=L_{\delta}(\hat{h}-\tilde{h})+O(r^{2-2n}),
\end{equation}
together with an integration by parts of $L_\delta$ produces
\begin{align*}
\begin{split}
    &\int_{\Sigma^r_\infty} \tilde{\rho}\phi^2(R_{\hat{g}}-R_{\tilde{g}})dV_{\tilde{g}}
    \\=& -\int_{\Sigma^r_\infty\setminus \Sigma^{r_0}_\infty} \left[\tilde{\rho}\phi\partial_i\phi\left(\partial_j(\hat{h}^{ij}-\tilde{h}^{ij})-\partial^i \operatorname{tr}_{\delta}(\hat{h}-\tilde{h})\right)+\tilde{\rho}\phi^2 O(r^{2-2n})\right]dV_{\tilde{g}}\\
    &+o(1).
\end{split}
\end{align*}
Here, $o(1)$ represents the boundary integral at $\partial \Sigma_{\infty}^r$ which tends to zero as $r\rightarrow\infty$, since $E_{ADM}(\hat{g})=E_{ADM}(\tilde{g})$.

\medskip

Applying the decay of all relevant terms to \eqref{Ig-Ig} yields
\begin{align*}
\begin{split}
I_r(\hat{g})[\phi]\geq& \, I_r(\tilde{g})[\phi] +o(1)\\
&+\int_{\Sigma^r_\infty\setminus \Sigma^{r_0}_\infty}\left(O(r^{2-n})\tilde{\rho} |\nabla \phi|_{\tilde{g}}^2 +O(r^{1-n})\tilde{\rho} \phi|\nabla \phi|_{\tilde{g}}+O(r^{2-2n})\tilde{\rho} \phi^2\right)dV_{\tilde{g}}.
\end{split}
\end{align*}
Now employ \eqref{2.2'} together with the Young's inequality
\begin{equation}
r^{1-n}\tilde{\rho} \phi|\nabla \phi|_{\tilde{g}}\leq \frac{r^{3.1-n}}{2}\tilde{\rho}|\nabla \phi|_{\tilde{g}}^2 +\frac{r^{-n-1.1}}{2}\tilde{\rho} \phi^2,
\end{equation}
and send $r\to \infty$ to find
\begin{equation}
\begin{split}
I(\hat{g})[\phi]\geq& \int_{\Sigma_\infty}\left(\frac{4}{5}\tilde{\rho}\tilde{Q} \phi^2 +\tilde{\rho}|\nabla \phi +\phi(X +\nabla\log\tilde{\rho})|_{\tilde{g}}^2 \right)dV_{\tilde{g}}
\\&- \int_{\Sigma_\infty\setminus\Sigma^{r_0}_\infty}C  r^{3.1-n}\tilde{\rho} |\nabla \phi|_{\tilde{g}}^2 dV_{\tilde{g}}
\end{split}
\end{equation}
for some $C >0$, where we have used $\tilde{Q}\ge \frac{\lambda}{4}r^{-n-1}$ and have chosen $r_0$ large enough to absorb the error terms involving $\tilde{\rho}\phi^2$. Since
\begin{equation}
\begin{split}
&|\nabla \phi|^2_{\tilde{g}}
\\=\,&|\nabla \phi +\phi(X +\nabla\log\tilde{\rho})|^2_{\tilde{g}} -2\phi\nabla \phi\cdot (X+\nabla\log\tilde{\rho})-\phi^2|X+\nabla\log\tilde{\rho}|^2_{\tilde{g}}\\
\leq \, & |\nabla \phi +\phi(X +\nabla\log\tilde{\rho})|_{\tilde{g}}^2 +\frac{1}{2}|\nabla \phi|^2_{\tilde{g}}+\phi^2|X+\nabla\log\tilde{\rho}|^2_{\tilde{g}},
\end{split}
\end{equation}
it follows that
\begin{align}
\begin{split}
|\nabla \phi|^2_{\tilde{g}}\leq \,& 2|\nabla \phi +\phi(X +\nabla\log\tilde{\rho})|_{\tilde{g}}^2 +2\phi^2|X+\nabla\log\tilde{\rho}|^2_{\tilde{g}}
\\ 
\leq \,& 2|\nabla \phi +\phi(X +\nabla\log\tilde{\rho})|_{\tilde{g}}^2 +C' r^{2-2n}\phi^2,
\end{split}
\end{align}
for some constant $C'$. 
Therefore we have
\begin{equation}
    I(\hat{g})[\phi]\geq \int_{\Sigma_\infty}\left(\frac{3}{5}\tilde{\rho}\tilde{Q} \phi^2 +\frac{1}{2}\tilde{\rho}|\nabla \phi +\phi(X +\nabla\log\tilde{\rho})|_{\tilde{g}}^2  \right)dV_{\tilde{g}}.
\end{equation}
Hence, if $r_0$ is chosen large enough such that $\sqrt{\operatorname{det} \tilde{g}}\ge \frac{5}{6}\sqrt{\operatorname{det} \hat{g}}$, then
\begin{equation}
I(\hat{g})[\phi]\geq \int_{\Sigma_\infty}\frac{3}{5}\tilde{\rho}\tilde{Q} \phi^2 dV_{\tilde{g}}
\geq \int_{\Sigma_\infty}\frac{1}{2}\tilde{\rho}\tilde{Q} \phi^2  dV_{\hat{g}}.
\end{equation}
The desired result is then obtained by setting $\hat{\rho}=\tilde{\rho}$ and $\hat{Q}=\frac{1}{2}\tilde{Q}$.
\end{proof}


\begin{thebibliography}{99}

\bibitem{AgostinianiMazzieriOronzio}
V.~Agostiniani, L.~Mazzieri, and F.~Oronzio,
\newblock A Green's function proof of the positive mass theorem,
\newblock {\em Comm. Math. Phys.} \textbf{405} (2024), no.~2, Paper No.~54.

\bibitem{AHK}
A. Alaee, P.-K. Hung, and M. Khuri, 
\newblock The positive energy theorem for asymptotically hyperboloidal initial data sets with toroidal infinity and related rigidity results,
\newblock {\em Comm. Math. Phys.} \textbf{396} (2022), no. 2, 451--480.

\bibitem{AnderssonCaiGalloway}
L.~Andersson, M.~Cai, and G.~Galloway,
\newblock Rigidity and positivity of mass for asymptotically hyperbolic manifolds,
\newblock {\em Ann. Henri Poincar\'e} \textbf{9} (2008), no.~1, 1--33.

\bibitem{Bartnik0}
R.~Bartnik,
\newblock The mass of an asymptotically flat manifold,
\newblock {\em Comm. Pure Appl. Math.} \textbf{39} (1986), no.~5, 661--693.

\bibitem{Bartnik}
R.~Bartnik,
\newblock New definition of quasi-local mass,
\newblock {\em Phys. Rev. Lett.} \textbf{62} (1989), no.~20, 2346--2348.

\bibitem{BartnikChrusciel}
R.~Bartnik, and P.~Chru\'sciel,
\newblock Boundary value problems for Dirac-type equations,
\newblock {\em J. Reine Angew. Math.} \textbf{579} (2005), 13--73.

\bibitem{BeigChrusciel}
R.~Beig, and P.~Chru\'sciel,
\newblock Killing vectors in asymptotically flat space-times: I.
Asymptotically translational Killing vectors and the rigid positive energy theorem,
\newblock {\em J. Math. Phys.} \textbf{37} (1996), no.~4, 1939--1961.

\bibitem{BiHaoHeShiZhu}
Y.~Bi, T.~Hao, S.~He, Y.~Shi, and J.~Zhu,
\newblock A proof for the Riemannian positive mass theorem up to dimension 19,
\newblock preprint, arXiv:2603.02769, 2026.

\bibitem{BrayPenrose}
H.~Bray,
\newblock Proof of the Riemannian Penrose inequality using the positive mass theorem,
\newblock {\em J. Differential Geom.} \textbf{59} (2001), no.~2, 177--267.

\bibitem{BrayKazarasKhuriStern}
H.~Bray, D.~Kazaras, M.~Khuri, and D.~Stern,
\newblock Harmonic functions and the mass of 3-dimensional asymptotically flat Riemannian manifolds,
\newblock {\em J. Geom. Anal.} \textbf{32} (2022), no.~6, Paper No.~184.

\bibitem{BrendleHung}
S. Brendle, and P.-K. Hung,
\newblock Systolic inequalities and the Horowitz-Myers conjecture,
\newblock preprint, arXiv:2406.04283, 2024.

\bibitem{BrendleHung1}
S. Brendle, and P.-K. Hung,
\newblock The rigidity statement in the Horowitz-Myers conjecture,
\newblock preprint, arXiv:2504.16812, 2025.

\bibitem{BrendleWang}
S.~Brendle, and Y.~Wang,
\newblock A dimension descent scheme for the positive mass theorem in arbitrary dimension,
\newblock preprint, arXiv:2604.08473, 2026.

\bibitem{BrendleWang2}
S.~Brendle, and Y.~Wang,
\newblock On the spacetime positive energy theorem in arbitrary dimension,
\newblock preprint, arXiv:2604.18561, 2026.

\bibitem{CecchiniLesourdZeidlerShields}
S.~Cecchini, M.~Lesourd, and R.~Zeidler,
\newblock Positive mass theorems for spin initial data sets with arbitrary ends and dominant energy shields,
\newblock {\em Int. Math. Res. Not. IMRN} 2024, no.~9, 7870--7890.

\bibitem{ChenWangYauHyperbolic}
P.-N.~Chen, M.-T.~Wang, and S.-T.~Yau,
\newblock Conserved quantities on asymptotically hyperbolic initial data sets,
\newblock {\em Adv. Theor. Math. Phys.} \textbf{20} (2016), no.~6, 1337--1375.

\bibitem{Generic910}
O.~Chodosh, C.~Mantoulidis, and F.~Schulze,
\newblock Generic regularity for minimizing hypersurfaces in dimensions 9 and 10,
\newblock preprint, arXiv:2302.02253, 2023.

\bibitem{Generic11}
O.~Chodosh, C.~Mantoulidis, F.~Schulze, and Z.~Wang,
\newblock Generic regularity for minimizing hypersurfaces in dimension 11,
\newblock preprint, arXiv:2506.12852, 2025.

\bibitem{ChristodoulouOMurchadha}
D.~Christodoulou and N.~\'O Murchadha,
\newblock The boost problem in general relativity,
\newblock {\em Comm. Math. Phys.} \textbf{80} (1981), no.~2, 271--300.

\bibitem{Chrusciel}
P.~Chru\'sciel,
\newblock Boundary conditions at spatial infinity from a Hamiltonian point of view,
\newblock in {\em Topological Properties and Global Structure of Space-Time},
P.~Bergmann and V.~de Sabbata, eds.,
Plenum Press, New York, 1986, pp.~49--59.

\bibitem{ChruscielDelayHyperbolic}
P.~Chru\'sciel, and E.~Delay,
\newblock The hyperbolic positive energy theorem,
\newblock {\em J. Eur. Math. Soc. (JEMS)} to appear, arXiv:1901.05263.

\bibitem{ChruscielGallowayBoundary}
P.~Chru\'sciel, and G.~Galloway,
\newblock Positive mass theorems for asymptotically hyperbolic Riemannian manifolds with boundary,
\newblock {\em Class. Quantum Grav.} \textbf{38} (2021), no.~23, 237001.

\bibitem{CGNP}
P. Chru\'sciel, G. Galloway, L. Nguyen, and T. Paetz, 
\newblock On the mass aspect function and positive energy theorems for asymptotically hyperbolic manifolds, 
\newblock {\em Class. Quantum Grav.} \textbf{35} (2018), no. 11, 115015.

\bibitem{ChruscielHerzlich}
P.~Chru\'sciel, and M.~Herzlich,
\newblock The mass of asymptotically hyperbolic Riemannian manifolds,
\newblock {\em Pacific J. Math.} \textbf{212} (2003), no.~2, 231--264.

\bibitem{ChruscielJezierskiLeski}
P.~Chru\'sciel, J.~Jezierski, and S.~\L{}\c{e}ski,
\newblock The Trautman--Bondi mass of hyperboloidal initial data sets,
\newblock {\em Adv. Theor. Math. Phys.} \textbf{8} (2004), no.~1, 83--139.

\bibitem{ChruscielMaerten}
P.~Chru\'sciel, and D.~Maerten,
\newblock Killing vectors in asymptotically flat space-times: II.
Asymptotically translational Killing vectors and the rigid positive energy theorem
in higher dimensions,
\newblock {\em J. Math. Phys.} \textbf{47} (2006), no.~2, 022502.

\bibitem{ChruscielMaertenTod}
P.~Chru\'sciel, D.~Maerten, and P.~Tod,
\newblock Rigid upper bounds for the angular momentum and centre of mass of non-singular asymptotically anti-de Sitter space-times,
\newblock {\em J. High Energy Phys.} \textbf{2006} (2006), no.~11, 084.

\bibitem{ChruscielReallTod}
P.~Chru\'sciel, H.~Reall, and P.~Tod,
\newblock On Israel--Wilson--Perj\'es black holes,
\newblock {\em Classical Quantum Gravity} \textbf{23} (2006), no.~7, 2519--2540.

\bibitem{DahlKroenckeMcCormick}
M.~Dahl, K.~Kr\"oncke, and S.~McCormick,
\newblock A volume-renormalized mass for asymptotically hyperbolic manifolds,
\newblock {\em Comm. Anal. Geom.} \textbf{33} (2025), no.~9, 2205--2261.

\bibitem{DahlSakovich}
M.~Dahl, and A.~Sakovich,
\newblock A density theorem for asymptotically hyperbolic initial data satisfying the dominant energy condition,
\newblock {\em Pure Appl. Math. Q.} \textbf{17} (2021), no.~5, 1669--1710.

\bibitem{Dain}
S.~Dain,
\newblock Proof of the angular momentum--mass inequality for axisymmetric black holes,
\newblock {\em J. Differential Geom.} \textbf{79} (2008), no.~1, 33--67.

\bibitem{EichmairPlateau}
M.~Eichmair,
\newblock The Plateau problem for marginally outer trapped surfaces,
\newblock {\em J. Differential Geom.} \textbf{83} (2009), no.~3, 551--583.

\bibitem{EichmairJang}
M.~Eichmair,
\newblock The Jang equation reduction of the spacetime positive energy theorem
in dimensions less than eight,
\newblock {\em Comm. Math. Phys.} \textbf{319} (2013), no.~3, 575--593.

\bibitem{EichmairHuangLeeSchoen}
M.~Eichmair, L.-H.~Huang, D.~Lee, and R.~Schoen,
\newblock The spacetime positive mass theorem in dimensions less than eight,
\newblock {\em J. Eur. Math. Soc. (JEMS)} \textbf{18} (2016), no.~1, 83--121.

\bibitem{EichmairMetzger}
M.~Eichmair, and J.~Metzger,
\newblock Jenkins--Serrin-type results for the Jang equation,
\newblock {\em J. Differential Geom.} \textbf{102} (2016), no.~2, 207--242.

\bibitem{FocardiMarcheseSpadaro}
M.~Focardi, A.~Marchese, and E.~Spadaro,
\newblock Improved estimate of the singular set of Dir-minimizing Q-valued functions via an abstract regularity result,
\newblock {\em J. Funct. Anal.} \textbf{268} (2015), no.~11, 3290--3325.

\bibitem{GHHP}
G.~Gibbons, S.~Hawking, G.~Horowitz, and M.~Perry,
\newblock Positive mass theorems for black holes,
\newblock {\em Comm. Math. Phys.} \textbf{88} (1983), no.~3, 295--308.

\bibitem{GibbonsHull}
G.~Gibbons, and C.~Hull,
\newblock A Bogomolny bound for general relativity and solitons in \(N=2\) supergravity,
\newblock {\em Phys. Lett. B} \textbf{109} (1982), no.~3, 190--194.

\bibitem{HKWX}
Q. Han, M. Khuri, G. Weinstein, and J. Xiong, 
\newblock The mass-angular momentum inequality for multiple black holes, 
\newblock preprint, arXiv:2501.15093, 2025.

\bibitem{HardtSimon}
R.~Hardt, and L.~Simon,
\newblock Area minimizing hypersurfaces with isolated singularities,
\newblock {\em J. Reine Angew. Math.} \textbf{362} (1985), 102--129.

\bibitem{HerzlichPenrose}
M.~Herzlich,
\newblock A Penrose-like inequality for the mass of Riemannian asymptotically flat manifolds,
\newblock {\em Comm. Math. Phys.} \textbf{188} (1997), no.~1, 121--133.


\bibitem{HirschHuang}
S.~Hirsch, and L.-H.~Huang,
\newblock Monotonicity of causal Killing vectors and geometry of ADM mass minimizers,
\newblock preprint, arXiv:2510.10306, 2025.

\bibitem{HirschJangZhangHyperboloidal}
S.~Hirsch, H.-C.~Jang, and Y.~Zhang,
\newblock Rigidity of asymptotically hyperboloidal initial data sets with vanishing mass,
\newblock {\em Comm. Math. Phys.} \textbf{406} (2025), no.~12, Paper No.~307.

\bibitem{HirschKazarasKhuri}
S.~Hirsch, D.~Kazaras, and M.~Khuri,
\newblock Spacetime harmonic functions and the mass of 3-dimensional
asymptotically flat initial data for the Einstein equations,
\newblock {\em J. Differential Geom.} \textbf{122} (2022), no.~2, 223--258.


\bibitem{HirschMiaoBoundary}
S.~Hirsch, and P.~Miao,
\newblock A positive mass theorem for manifolds with boundary,
\newblock {\em Pacific J. Math.} \textbf{306} (2020), no.~1, 185--198.

\bibitem{HirschPayneZhang}
S.~Hirsch, A.~Payne, and Y.~Zhang,
\newblock The spacetime positive mass theorem with multiple time dimensions,
\newblock preprint, arXiv:2602.20081, 2026.

\bibitem{HirschZhang}
S.~Hirsch, and Y.~Zhang,
\newblock The case of equality for the spacetime positive mass theorem,
\newblock {\em J. Geom. Anal.} \textbf{33} (2023), Paper No.~30.

\bibitem{HirschZhang2}
S.~Hirsch, and Y.~Zhang,
\newblock Initial data sets with vanishing mass are contained in pp-wave spacetimes,
\newblock {\em J. Eur. Math. Soc. (JEMS)} to appear, arXiv:2403.15984.

\bibitem{HirschZhang3}
S.~Hirsch and Y.~Zhang,
\newblock Causal character of imaginary Killing spinors and spinorial slicings,
\newblock preprint, arXiv:2512.14569, 2025.

\bibitem{HuangJangMartin}
L.-H.~Huang, H.-C.~Jang, and D.~Martin,
\newblock Mass rigidity for hyperbolic manifolds,
\newblock {\em Comm. Math. Phys.} \textbf{376} (2020), no.~3, 2329--2349.

\bibitem{HuangLee}
L.-H.~Huang, and D.~Lee,
\newblock Equality in the spacetime positive mass theorem,
\newblock {\em Comm. Math. Phys.} \textbf{376} (2020), no.~3, 2379--2407.

\bibitem{HuangLee2}
L.-H.~Huang, and D.~Lee,
\newblock Bartnik mass minimizing initial data sets and improvability of the dominant energy scalar,
\newblock {\em J. Differential Geom.} \textbf{126} (2024), no.~2, 741--800.

\bibitem{HuangLee3}
L.-H.~Huang, and D.~Lee,
\newblock Equality in the spacetime positive mass theorem II,
\newblock {\em Calc. Var. Partial Differential Equations} \textbf{64} (2025), no. 3, Paper No. 92.

\bibitem{HuiskenIlmanen}
G.~Huisken, and T.~Ilmanen,
\newblock The inverse mean curvature flow and the Riemannian Penrose inequality,
\newblock {\em J. Differential Geom.} \textbf{59} (2001), no.~3, 353--437.

\bibitem{Jang}
P.-S.~Jang,
\newblock On the positivity of energy in general relativity,
\newblock {\em J. Math. Phys.} \textbf{19} (1978), 1152--1155.

\bibitem{KazarasKhuriLin}
D.~Kazaras, M.~Khuri, and M.~Lin,
\newblock The positive mass theorem for creased initial data,
\newblock preprint, arXiv:2508.17585, 2025.

\bibitem{Kroencke2}
K.~Kr\"oncke, F.~Oronzio, and A.~Pinoy,
\newblock Green functions and a positive mass theorem for asymptotically hyperbolic \(3\)-manifolds,
\newblock preprint, arXiv:2506.07108, 2025.

\bibitem{LeeLeFloch}
D.~Lee, and P.~LeFloch,
\newblock The positive mass theorem for manifolds with distributional curvature,
\newblock {\em Comm. Math. Phys.} \textbf{339} (2015), no.~1, 99--120.

\bibitem{LN}
D. Lee, and A. Neves, 
\newblock The Penrose inequality for asymptotically locally hyperbolic spaces with nonpositive mass, 
\newblock {\em Comm. Math. Phys.} \textbf{339} (2015), no. 2, 327--352.

\bibitem{LesourdUngerYau}
M.~Lesourd, R.~Unger, and S.-T.~Yau,
\newblock The positive mass theorem with arbitrary ends,
\newblock {\em J. Differential Geom.} \textbf{128} (2024), no.~1, 257--293.

\bibitem{LiRicciFlowPMT}
Y.~Li,
\newblock Ricci flow on asymptotically Euclidean manifolds,
\newblock {\em Geom. Topol.} \textbf{22} (2018), no.~3, 1837--1891.

\bibitem{Lohkamp1}
J.~Lohkamp,
\newblock The higher dimensional positive mass theorem I,
\newblock preprint, arXiv:math/0608795, 2006.

\bibitem{Lohkamp2}
J.~Lohkamp,
\newblock The higher dimensional positive mass theorem II,
\newblock preprint, arXiv:1612.07505, 2017.

\bibitem{LundbergHyperbolic}
D.~Lundberg,
\newblock On Jang's equation and the positive mass theorem for asymptotically hyperbolic initial data sets with dimensions above three and below eight,
\newblock preprint, arXiv:2309.11330, 2023.

\bibitem{Maerten}
D.~Maerten,
\newblock Positive energy-momentum theorem for AdS-asymptotically hyperbolic manifolds,
\newblock {\em Ann. Henri Poincar\'e} \textbf{7} (2006), no.~5, 975--1011.

\bibitem{McCormickMiaoPenrose}
S.~McCormick, and P.~Miao,
\newblock On a Penrose-like inequality in dimensions less than eight,
\newblock {\em Int. Math. Res. Not. IMRN} 2019, no.~7, 2069--2084.

\bibitem{MiaoPMT}
P.~Miao,
\newblock Positive mass theorem on manifolds admitting corners along a hypersurface,
\newblock {\em Adv. Theor. Math. Phys.} \textbf{6} (2002), no.~6, 1163--1182.

\bibitem{Michel}
B.~Michel,
\newblock Geometric invariance of mass-like asymptotic invariants,
\newblock {\em J. Math. Phys.} \textbf{52} (2011), no.~5, 052504.

\bibitem{ParkerTaubes}
T.~Parker, and C.~Taubes,
\newblock On Witten's proof of the positive energy theorem,
\newblock {\em Comm. Math. Phys.} \textbf{84} (1982), no.~2, 223--238.

\bibitem{SakovichHyperbolic}
A.~Sakovich,
\newblock The Jang equation and the positive mass theorem in the asymptotically hyperbolic setting,
\newblock {\em Comm. Math. Phys.} \textbf{386} (2021), no.~2, 903--973.

\bibitem{SchoenYau}
R.~Schoen, and S.-T.~Yau,
\newblock On the proof of the positive mass conjecture in general relativity,
\newblock {\em Comm. Math. Phys.} \textbf{65} (1979), no.~1, 45--76.

\bibitem{SchoenYau79}
R.~Schoen, and S.-T.~Yau,
\newblock Complete manifolds with nonnegative scalar curvature and the positive action conjecture in general relativity,
\newblock {\em Proc. Natl. Acad. Sci. USA} \textbf{76} (1979), no.~3, 1024--1025.

\bibitem{SchoenYauII}
R.~Schoen, and S.-T.~Yau,
\newblock Proof of the positive mass theorem II,
\newblock {\em Comm. Math. Phys.} \textbf{79} (1981), 231--260.

\bibitem{SchoenYauBondi}
R.~Schoen, and S.-T.~Yau,
\newblock Proof that the Bondi mass is positive,
\newblock {\em Phys. Rev. Lett.} \textbf{48} (1982), no.~6, 369--371.

\bibitem{SchoenYau22}
R.~Schoen and S.-T.~Yau,
\newblock Positive scalar curvature and minimal hypersurface singularities,
\newblock in {\em Surveys in Differential Geometry}, vol.~24,
International Press, Boston, MA, 2022, pp.~441--480.

\bibitem{SchoenZhou}
R.~Schoen, and X.~Zhou,
\newblock Convexity of reduced energy and mass angular momentum inequalities,
\newblock {\em Ann. Henri Poincar\'e} \textbf{14} (2013), no.~7, 1747--1773.

\bibitem{ShiTam}
Y.~Shi, and L.-F.~Tam,
\newblock Positive mass theorem and the boundary behaviors of compact manifolds with nonnegative scalar curvature,
\newblock {\em J. Differential Geom.} \textbf{62} (2002), no.~1, 79--125.


\bibitem{Generic8}
N.~Smale,
\newblock Generic regularity of homologically area minimizing hypersurfaces in eight-dimensional manifolds,
\newblock {\em Comm. Anal. Geom.} \textbf{1} (1993), no.~2, 217--228.

\bibitem{Tsang}
T.-Y.~Tsang,
\newblock Positive mass theorem for initial data sets with arbitrary ends,
\newblock preprint, arXiv:2604.26978, 2026.

\bibitem{WangHyperbolic}
X.~Wang,
\newblock The mass of asymptotically hyperbolic manifolds,
\newblock {\em J. Differential Geom.} \textbf{57} (2001), no.~2, 273--299.

\bibitem{Witten}
E.~Witten,
\newblock A new proof of the positive energy theorem,
\newblock {\em Comm. Math. Phys.} \textbf{80} (1981), no.~3, 381--402.

\bibitem{Yip}
K.~P.~Yip,
\newblock A strictly positive mass theorem,
\newblock {\em Comm. Math. Phys.} \textbf{108} (1987), no.~4, 653--665.

\bibitem{XiaoZhang}
X.~Zhang,
\newblock A definition of total energy-momenta and the positive mass theorem on asymptotically hyperbolic 3-manifolds. I,
\newblock {\em Comm. Math. Phys.} \textbf{249} (2004), no.~3, 529--548.

\end{thebibliography}
\end{document}